\newtheorem{definition}{Definition}[section]
\newtheorem{theorem}[definition]{Theorem}
\newtheorem{lemma}[definition]{Lemma}
\newtheorem{proposition}[definition]{Proposition}
\theoremstyle{definition}
\newtheorem{remark}[definition]{Remark}
\newcommand{\la}{\left\langle}
\newcommand{\ra}{\right\rangle}
\newcommand{\ucpa}{\text{UCP} \big(\mathcal{A}, \mathcal{B}  (\mathcal{H}) \big)}
\newcommand{\ucpapg}{\text{UCP}^{G_\tau} \big(\mathcal{A}, \mathcal{B}  (\mathcal{H} ) \big)}
\newcommand{\cpa}{\text{CP} \big(\mathcal{A}, \mathcal{B}  (\mathcal{H} ) \big)}
\newcommand{\cba}{\text{CB} \big(\mathcal{A}, \mathcal{B} (\mathcal{H} ) \big)}
\newcommand{\cpapg}{\text{CP}^{G_\tau} \big(\mathcal{A}, \mathcal{B} (\mathcal{H}) \big)}
\newcommand{\pacom}{\pi \big ( \mathcal{A} \big)^\prime}
\title[Extreme points of Unital Completely Positive maps invariant under partial action]{Extreme points of Unital Completely Positive maps invariant under partial action}
\author[Kulkarni]{Chaitanya J. Kulkarni}
\address{Chaitanya J. Kulkarni, Indian Institute of Science  Education and Research (IISER) Mohali, Knowledge City, S.A.S Nagar, Punjab 140306, India.}
\email{chaitanyakulkarni58@gmail.com}
\author[Hossain]{Md Amir Hossain}
\address{Md Amir Hossain, The Institute of Mathematical Sciences, A CI of Homi Bhabha National Institute, 4th cross street, CIT Campus, Taramani, Chennai, 600113, India.}
\email{mdamirhossain18@gmail.com}
\subjclass[]{46A55, 46B22, 46L55, 46L08, 47L07}
\keywords{extreme points; barycentric decomposition; completely positive maps; partial actions}
\thanks{\emph{Corresponding author's email:}  mdamirhossain18@gmail.com}
\begin{document}

\maketitle

%%%%%%%%%%%%%%%%%%%%%%%%%%%%%%%%%%%%%%%%%%%%%%%%%%%%%%%%%%%%%%%%%%%%%%%%%%%%%%%%%%%%%%%
\begin{abstract}
The classical Choquet theorem establishes a barycentric decomposition for elements in a compact convex subset of a locally convex topological vector space. This decomposition is achieved through a probability measure that is supported on the set of extreme points of the subset. In this work, we consider a partial action $\tau$ of a group $G$ on a $C^\ast$-algebra $\mathcal{A}$. For a fixed Hilbert space $\mathcal{H}$, we consider the set of all unital completely positive maps from $\mathcal{A}$ to $\mathcal{B}(\mathcal{H})$ that are invariant under the partial action $\tau$. This set forms a compact convex subset of a locally convex topological vector space. To complete the picture of the barycentric decomposition provided by the classical Choquet theorem, we characterize the set of extreme points of this set.
\end{abstract}

%%%%%%%%%%%%%%%%%%%%%%%%%%%%%%%%%%%%%%%%%%%%%%%%%%%%%%%%%%%%%%%%%%%%%%%%%%%%%%%%%%%%%%%

\section{Introduction} \label{sec;Introduction}

Decomposing a given element into other well-understood structures is often the broad goal in the decomposition theory. One of the methods in this direction is to decompose elements within a compact and convex subset of a locally convex topological vector space using the set of extreme points of such subsets. This leads to the study of the set of extreme points of a compact and convex subset of a locally convex topological vector space. 
In \cite{Arveson1}, W. Arveson characterized the extreme points of various subsets of completely positive maps within the classical (linear) convexity framework. Following this, several authors have introduced various non-commutative analogues of convexity. For instance, the concept of $C^*$-convexity was explored in \cite{LP, FM}, and matrix convexity was examined in \cite{EW}.  In \cite{DK}, the authors introduced the concept of nc-convexity. Additionally, CP-convexity was introduced in 1993 in \cite{F}. More recently, in \cite{AS}, the authors introduced $P$-$C^\ast$-convexity, defined relative to a positive operator $P$ on a Hilbert space. 

However, throughout this article, by a convex structure we always mean convexity in the classical (linear) sense. In this framework, the classical Choquet theorem provides a decomposition in a compact convex subset of a locally convex space. 
\begin{theorem} [{\cite[Page 14]{Phlp}}] 
Suppose that $X$ is a metrizable compact convex subset of a locally convex space $E$ and that $x_0$ is an element of $X$. Then there is a probability measure $\mu$ on $X$ whose \textit{barycenter} is $x_0$, that is,
\begin{equation} \label{eqn; barycentric decomposition 1}
f(x_0) = \int_{X} f \, \mathrm{d} \mu   
\end{equation}
for every continuous linear functional $f$ on $E$ and $\mu$ is supported by the extreme points of $X$.
\end{theorem}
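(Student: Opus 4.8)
This is the classical Choquet theorem in the metrizable setting, so the plan is to reproduce the standard upper-envelope argument. First I would fix the language of barycenters: for a Radon probability measure $\nu$ on $X$, the functional $f \mapsto \int_X f \, d\nu$ on $E^\ast$ is represented by a unique point $b(\nu) \in X$, existence coming from compactness of $X$ (for finitely many $f_1, \dots, f_k \in E^\ast$ the point $(\int_X f_1 \, d\nu, \dots, \int_X f_k \, d\nu)$ lies in the convex compact image of $X$, so the closed sets $\{x \in X : f(x) = \int_X f \, d\nu\}$ have the finite intersection property) and uniqueness from the fact that $E^\ast$ separates the points of $X$. With this, the theorem reduces to producing a probability measure $\mu$ with $b(\mu) = x_0$ that is carried by $\mathrm{ext}(X)$, since the displayed barycenter identity is precisely the assertion $b(\mu) = x_0$.

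Next I would exploit metrizability to manufacture a \emph{strictly} convex continuous function. Since $X$ is compact metrizable, $C(X)$ is separable, so I can choose a countable family $\{f_n\} \subseteq E^\ast$ with $\sup_X |f_n| \le 1$ that separates the points of $X$, and set $\phi = \sum_{n \ge 1} 2^{-n} f_n^2$. As each $f_n$ is affine, $\phi$ is continuous and convex, and the separation property makes it strictly convex. For a continuous convex $g$ I would introduce its upper envelope $\hat g(x) = \inf\{a(x) : a \text{ continuous affine}, \ a \ge g\}$, which is concave, upper semicontinuous, dominates $g$, and admits the dual description $\hat g(x) = \sup\{\int_X g \, d\nu : b(\nu) = x\}$; establishing this duality via a Hahn--Banach / minimax argument is the first technical point.

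The geometric heart of the argument is the identification $\{x : \phi(x) = \hat\phi(x)\} = \mathrm{ext}(X)$. If $x$ is not extreme, write $x = \tfrac{1}{2}(y + z)$ with $y \ne z$; strict convexity of $\phi$ and concavity of $\hat\phi$ give $\phi(x) < \tfrac{1}{2}(\phi(y) + \phi(z)) \le \tfrac{1}{2}(\hat\phi(y) + \hat\phi(z)) \le \hat\phi(x)$, so the envelope strictly exceeds $\phi$ there. Conversely, if $x$ is extreme then Milman's theorem forces every representing measure to be $\delta_x$, whence $\hat\phi(x) = \phi(x)$ by the dual description. This pins down $\mathrm{ext}(X)$ as the Borel (indeed $G_\delta$) set $\{\hat\phi = \phi\}$, which is exactly what lets the phrase ``supported by the extreme points'' acquire measure-theoretic meaning.

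Finally I would optimize. The set $M_{x_0}$ of probability measures with barycenter $x_0$ is weak-$\ast$ compact, convex and nonempty (it contains $\delta_{x_0}$), and $\nu \mapsto \int_X \phi \, d\nu$ is weak-$\ast$ continuous, so it attains its maximum at some $\mu \in M_{x_0}$, and by the duality this maximum equals $\hat\phi(x_0)$. A Jensen inequality for the upper semicontinuous concave function $\hat\phi$ yields $\int_X \hat\phi \, d\mu \le \hat\phi(b(\mu)) = \hat\phi(x_0) = \int_X \phi \, d\mu$, while $\hat\phi \ge \phi$ gives the opposite inequality; hence $\int_X (\hat\phi - \phi) \, d\mu = 0$ with a nonnegative integrand, so $\mu$ is carried by $\{\hat\phi = \phi\} = \mathrm{ext}(X)$, and $b(\mu) = x_0$ completes the decomposition. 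I expect the main obstacles to be the envelope duality together with the Jensen inequality for a merely upper semicontinuous concave function, and the appeal to Milman's theorem for the extreme-point identification; the measurability of $\mathrm{ext}(X)$, by contrast, is precisely where metrizability earns its keep.
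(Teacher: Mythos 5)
The paper offers no proof of this statement at all --- it is imported verbatim from Phelps's book \cite[Page 14]{Phlp} --- and your outline is precisely the canonical argument from that source (strictly convex continuous function built from a countable separating family, upper-envelope duality, and maximization of $\nu \mapsto \int_X \phi \, d\nu$ over the weak-$\ast$ compact set of representing measures, the latter being a minor variant of Phelps's Hahn--Banach extension step), so it is correct and matches the intended proof. Two cosmetic repairs: the fact that an extreme point admits $\delta_x$ as its \emph{only} representing measure is Bauer's lemma rather than Milman's theorem (though Milman's theorem can be made to yield it via a regularity-and-decomposition argument), and your Jensen step requires the standard lemma that $\int_X a \, d\mu = a(b(\mu))$ for \emph{every} continuous affine $a$ on $X$ --- not just restrictions of elements of $E^\ast$ plus constants --- which is proved by pushing $\mu$ forward to the graph of $a$.
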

\noindent
The decomposition of $x_0$ obtained in Equation \eqref{eqn; barycentric decomposition 1} is a \textit{barycentric decomposition} of $x_0$. Since the set of continuous linear functionals on $E$ separates the points of $E$, it suffices to consider only such functionals in the statement of the Choquet theorem. Moreover, the important fact is that the probability measure $\mu$ obtained from the Choquet theorem is supported by the set of extreme points of $X$. Therefore, understanding the structure of the extreme points of \( X \) is essential for completing the decomposition outlined by the Choquet theorem.

In this article, we consider a group $G$, a Hilbert space $\mathcal{H}$ and a unital $C^\ast$-algebra $\mathcal{A}$. Let $\tau$ be a \textit{partial action} of $G$ on $\mathcal{A}$ (see Section~\ref{sec; RND} for the definition of partial action). We then consider the set of all unital completely positive maps from $\mathcal{A}$ to $\mathcal{B}(\mathcal{H})$, the algebra of all bounded linear operators on $\mathcal{H}$, that are invariant under the partial action $\tau$. This set is denoted by $\ucpapg$. The set $\ucpapg$ forms a compact (with respect to the BW-topology) and convex subset of the locally convex space $\cba$, consisting of all completely bounded maps from $\mathcal{A}$ to $\mathcal{B}(\mathcal{H})$. Now, by applying the Choquet theorem in this setting, for $X= \ucpapg$ and $E= \cba$, for a given $\phi_0 \in \ucpapg$, we get a probability measure $\mu$ on $\ucpapg$ whose barycenter is $\phi_0$, that is,  
\begin{equation*}
f(\phi_0) = \int_{X} f \, \mathrm{d} \mu   
\end{equation*}
for every continuous linear functional $f$ on $\cba$ and $\mu$ is supported by the extreme points of $\ucpapg$. Thus, to complete the picture of the barycentric decomposition obtained from the Choquet theorem, it remains to provide the structure of extreme points of the set $\ucpapg$. The main result (Theorem~\ref{thm;egiucp}) of this article characterizes the set of extreme points of the compact and convex set $\ucpapg$. In particular, the article \cite{BK3} characterizes the set of extreme points of $\ucpapg$ when $\tau$ is a (global) group action. 

\medskip

The article is organized as follows. It is divided into four sections. In Section~\ref{sec;Preliminaries}, we recall some important results from the theory of Stinespring’s and Paschke’s dilations of completely positive maps, as well as Radon--Nikodym derivative type results in this context. These results are used extensively in the next two sections. In Section~\ref{sec; RND}, we establish Radon--Nikodym type results in the setting of partial group actions. These results serve as the main technical tools for proving the main theorem of the article. Finally, Section~\ref{sec; main result} is aimed to characterize the extreme points of the compact and convex set $\ucpapg$. This characterization is presented within the framework of Stinespring’s and Paschke’s dilations of completely positive maps (see Theorem~\ref{thm;egiucp}). The proof of the main theorem in this article uses a technique similar to the one given in~\cite{BK1}.

%%%%%%%%%%%%%%%%%%%%%%%%%%%%%%%%%%%%%%%%%%%%%%%%%%%%%%%%%%%%%%%%%%%%%%%%%%%%%%%%%%%%%%%

\section{Preliminaries} \label{sec;Preliminaries}
In this section, we briefly review some important results related to completely positive maps. We recall Stinespring’s and Paschke’s dilations of completely positive maps, and then discuss the Radon--Nikodym derivatives of completely positive maps with respect to both dilations. The results recalled in this section are mainly borrowed from~\cite{Arveson1, Paschke, Paulsen}. 

%%%%%%%%%%%%%%%%%%%%%%%%%%%%%%%%%%%%%%%%%%%%%%%%%%%%%%%%%%%%%%%%%%%%%%%%%%%%%%%%%%%%%%%

\subsection{Stinespring's dilation of a completely positive map}
We start by recalling Stinespring's dilation theorem.

\begin{theorem}[{\cite[Theorem 4.1]{Paulsen}}]\label{thm;sd}
Let $\mathcal{A}$ be a unital $C^\ast$-algebra and $\mathcal{H}$ be a Hilbert space. Let $\phi : \mathcal{A} \rightarrow \mathcal{B}(\mathcal{H})$ be a completely positive map. Then there exists a Hilbert space $\mathcal{K}$, a bounded linear map $V : \mathcal{H} \rightarrow \mathcal{K}$, and a unital \(^*\)-homomorphism $\pi : \mathcal{A} \rightarrow \mathcal{B}(\mathcal{K})$ such that
\begin{equation*}
\phi(a) = V^*\pi(a)V \; \;  \text{for all $a \in \mathcal{A}$}. 
\end{equation*} 
Moreover, the set $\big \{ \pi(a)Vh : a \in \mathcal{A}, h \in \mathcal{H} \big \}$ spans a dense subspace of $\mathcal{K}$, and if $\phi$ is unital, then $V : \mathcal{H} \rightarrow \mathcal{K}$ is an isometry.
\end{theorem}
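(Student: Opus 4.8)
The plan is to carry out a GNS-type construction on the algebraic tensor product $\mathcal{A} \otimes \mathcal{H}$. First I would define a sesquilinear form on $\mathcal{A} \otimes \mathcal{H}$ by setting, on elementary tensors,
\begin{equation*}
\langle a \otimes h, \, b \otimes k \rangle = \langle \phi(b^* a) h, \, k \rangle_{\mathcal{H}},
\end{equation*}
and extending by linearity in both variables. The crucial point is that this form is positive semi-definite, and this is exactly where complete positivity is used: for a finite sum $u = \sum_{i=1}^n a_i \otimes h_i$ one computes
\begin{equation*}
\langle u, u \rangle = \sum_{i,j=1}^n \langle \phi(a_j^* a_i) h_i, \, h_j \rangle,
\end{equation*}
which is the value of the positive operator $[\phi(a_j^* a_i)]_{i,j} \in M_n(\mathcal{B}(\mathcal{H}))$ evaluated at the vector $(h_1, \dots, h_n)$. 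Since $[a_j^* a_i]_{i,j}$ is positive in $M_n(\mathcal{A})$ and $\phi$ is completely positive, the matrix $[\phi(a_j^* a_i)]_{i,j}$ is positive, so $\langle u, u \rangle \geq 0$.

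Next I would pass to the Hilbert space. Let $N = \{ u \in \mathcal{A} \otimes \mathcal{H} : \langle u, u \rangle = 0 \}$; by the Cauchy--Schwarz inequality for semi-definite forms this is a linear subspace, the induced form on the quotient $(\mathcal{A} \otimes \mathcal{H})/N$ is a genuine inner product, and I take $\mathcal{K}$ to be the completion. I then define $\pi(a)$ on elementary tensors by $\pi(a)(b \otimes h) = (ab) \otimes h$, extended linearly. One checks that $\pi(a)$ descends to the quotient (it maps $N$ into $N$) and is bounded, with $\|\pi(a)\| \leq \|a\|$, by estimating $\langle \pi(a) u, \pi(a) u \rangle$ against $\|a\|^2 \langle u, u \rangle$ using positivity of $[\phi(b_j^*(\|a\|^2 - a^* a) b_i)]$; thus $\pi(a)$ extends to $\mathcal{B}(\mathcal{K})$. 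Verifying that $\pi$ is a unital $*$-homomorphism is then a direct computation on elementary tensors. Finally I define $V : \mathcal{H} \to \mathcal{K}$ by $Vh = [\,1 \otimes h\,]$; this is bounded, and for all $a \in \mathcal{A}$ and $h, k \in \mathcal{H}$,
\begin{equation*}
\langle V^* \pi(a) V h, \, k \rangle = \langle \pi(a)(1 \otimes h), \, 1 \otimes k \rangle = \langle a \otimes h, \, 1 \otimes k \rangle = \langle \phi(a) h, \, k \rangle,
\end{equation*}
which yields $\phi(a) = V^* \pi(a) V$.

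For the remaining assertions, the density of $\{ \pi(a)Vh : a \in \mathcal{A}, h \in \mathcal{H} \}$ is immediate, since $\pi(a)Vh = [\,a \otimes h\,]$ and these classes span the dense image of $\mathcal{A} \otimes \mathcal{H}$ in $\mathcal{K}$. If $\phi$ is unital, then $\|Vh\|^2 = \langle \phi(1) h, h \rangle = \langle h, h \rangle = \|h\|^2$, so $V$ is an isometry. I expect the main obstacle to be the positivity of the sesquilinear form in the first step, which is the genuine analytic content of the theorem and the only place complete positivity (rather than mere positivity) of $\phi$ is essential; the boundedness of $\pi(a)$ is the second place requiring care, again relying on a complete-positivity estimate rather than a routine norm bound.
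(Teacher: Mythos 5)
Your proof is correct and is essentially the standard argument: the paper itself does not prove this statement but cites it from Paulsen's book, and your GNS-type construction on $\mathcal{A} \otimes \mathcal{H}$ (positivity of the form via complete positivity, quotient by the null space, the multiplication representation, and $Vh = [\,1 \otimes h\,]$) is exactly the proof given there. The density and isometry claims are handled correctly as well, so there is nothing to add.
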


Corresponding to a completely positive map $\phi : \mathcal{A} \rightarrow \mathcal{B}(\mathcal{H})$, a triple $(\pi, V, \mathcal{K} )$ obtained as in Theorem~\ref{thm;sd} is called a minimal Stinespring representation for $\phi$. The following proposition states that for a given completely positive map $\phi$, a minimal Stinespring representation is unique up to a unitary equivalence.  

\begin{proposition}[{\cite[Proposition 4.2]{Paulsen}}] \label{prop;umsd}
Let $\mathcal{A}$ be a unital $C^\ast$-algebra, $\mathcal{H}$ be a Hilbert space and let $\phi : \mathcal{A} \rightarrow \mathcal{B}(\mathcal{H})$ be a completely positive map. Suppose $\big (\pi_i, V_i, \mathcal{K}_i \big)_{i = 1, 2}$ be two minimal Stinespring representations for $\phi$. Then there exits a unitary $U : \mathcal{K}_1 \rightarrow \mathcal{K}_2$ such that $UV_1 = V_2$ and $U \pi_1 U^* = \pi_2$. 
\end{proposition}

\noindent 
For a given completely positive map $\phi : \mathcal{A} \rightarrow \mathcal{B}(\mathcal{H})$ and  a minimal Stinespring representation  $\big (\pi, V, \mathcal{K} \big )$, in view of Proposition \ref{prop;umsd}, we call $\big (\pi, V, \mathcal{K} \big )$ as the minimal Stinespring representation for $\phi$. For a unital $C^\ast$-algebra $\mathcal{A}$ and a Hilbert space $\mathcal{H}$, we consider the set
\begin{equation*}
\cpa := \big \{ \phi : \mathcal{A} \rightarrow \mathcal{B}(\mathcal{H}) \; \; : \; \;  \phi \; \; \text{is a completely positive map} \big \}.
\end{equation*} 
Now define a partial order "$\leq$" on $\cpa$ by $\phi_1 \leq \phi_2$ \big (where $\phi_1, \phi_2 \in \cpa$ \big ), if $\phi_2 - \phi_1 \in \cpa$ . Then for each fixed $\phi \in \cpa$, consider the set
\begin{equation*}
[0, \phi] = \big \{ \theta \in \cpa \; \; : \; \; \theta \leq \phi \big \}.
\end{equation*} 
Let $\phi \in \cpa$ and $\big (\pi, V, \mathcal{K} \big )$ be the minimal Stinespring representation for $\phi$. Then for each fixed $T \in \pacom$ define a map 
$\phi_T : \mathcal{A} \rightarrow \mathcal{B}(\mathcal{H})$ by $\phi_T(\cdot) := V^* \pi (\cdot) T V$. Next, we recall the Radon--Nikodym type result for completely positive maps from~\cite{Arveson1}.

\begin{theorem} [{\cite[Theorem 1.4.2]{Arveson1}}] \label{thm;arnd}
Let $\phi \in \cpa$ and $\big (\pi, V, \mathcal{K} \big )$ be the minimal Stinespring representation for $\phi$. Then there exists an affine order isomorphism of the partially ordered convex set of operators $ \big \{T \in \pacom \; \; : \; \;  0 \leq T \leq  1_\mathcal{K} \}$ onto $[0, \phi]$, which is given by the map 
\begin{equation*}
T \mapsto \phi_T(\cdot) := V^* \pi (\cdot) T V.
\end{equation*}
\end{theorem}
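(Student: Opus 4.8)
The plan is to verify that the map $T \mapsto \phi_T$ is well-defined into $[0,\phi]$, affine, order-preserving with order-preserving inverse, and bijective, treating surjectivity as the substantial part. For well-definedness, given $T \in \pacom$ with $0 \le T \le 1_{\mathcal{K}}$, I would factor $T = T^{1/2}T^{1/2}$ and use that $T^{1/2}$ commutes with every $\pi(a)$ to rewrite $\phi_T(\cdot) = (T^{1/2}V)^*\pi(\cdot)(T^{1/2}V)$, which is manifestly completely positive by the Stinespring form of Theorem~\ref{thm;sd}; applying the same factorization to $1_{\mathcal{K}} - T \ge 0$ shows $\phi - \phi_T = \phi_{1_{\mathcal{K}}-T}$ is completely positive, so $\phi_T \in [0,\phi]$. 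Affinity (indeed linearity) in $T$ is immediate from the defining formula, and if $T_1 \le T_2$ then $T_2 - T_1 \ge 0$ lies in $\pacom$, so $\phi_{T_2} - \phi_{T_1} = \phi_{T_2 - T_1}$ is completely positive, giving monotonicity.

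For injectivity I would exploit minimality. If $\phi_{T_1} = \phi_{T_2}$, set $S = T_1 - T_2 \in \pacom$, so that $V^*\pi(c)SV = 0$ for all $c \in \mathcal{A}$. Using $S \in \pacom$ and that $\pi$ is a $*$-homomorphism, I compute $\langle S\,\pi(a)Vh, \pi(b)Vk\rangle = \langle V^*\pi(b^*a)SVh, k\rangle = 0$ for all $a,b \in \mathcal{A}$ and $h,k \in \mathcal{H}$; since the vectors $\pi(a)Vh$ span a dense subspace of $\mathcal{K}$, this forces $S = 0$. The same vanishing argument, combined with the monotonicity already established, shows the inverse map is order-preserving, so the correspondence is an affine order isomorphism onto its range.

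The main obstacle is surjectivity: given $\theta \in [0,\phi]$ I must manufacture the operator $T$. I would define a sesquilinear form on the dense subspace spanned by $\{\pi(a)Vh\}$ by $\big[\sum_i \pi(a_i)Vh_i, \sum_j \pi(b_j)Vk_j\big] := \sum_{i,j}\langle \theta(b_j^*a_i)h_i, k_j\rangle$. The crucial estimate is the sandwich $0 \le [\xi,\xi] \le \langle \xi,\xi\rangle_{\mathcal{K}}$: the left inequality is complete positivity of $\theta$ (positivity of the matrix $[\theta(a_j^*a_i)]$), while the right inequality is the hypothesis $\theta \le \phi$ together with the identity $\langle \xi,\xi\rangle_{\mathcal{K}} = \sum_{i,j}\langle \phi(a_j^*a_i)h_i, h_j\rangle$ coming from the Stinespring form of $\phi$. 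This estimate simultaneously proves the form is well-defined (independent of the representation of $\xi$, via Cauchy--Schwarz applied to the positive form) and bounded, so it is implemented by a unique operator $T$ with $0 \le T \le 1_{\mathcal{K}}$. I would then check $T \in \pacom$ by comparing $\langle T\pi(c)\xi, \eta\rangle$ with $\langle \pi(c)T\xi, \eta\rangle$ on the spanning vectors, both of which reduce to $\langle \theta(b^*ca)h, k\rangle$, and finally recover $\phi_T = \theta$ by evaluating $\langle V^*\pi(a)TVh, k\rangle$ using unitality of $\pi$ (so that $Vh = \pi(1_{\mathcal{A}})Vh$), which collapses to $\langle \theta(a)h, k\rangle$. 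The care required in extracting well-definedness and boundedness of the form from the single inequality $\theta \le \phi$ is where the real work lies.
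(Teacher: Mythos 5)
Your proof is correct, and it is essentially the proof of the cited result: the paper states Theorem~\ref{thm;arnd} without proof (quoting Arveson's Theorem 1.4.2), and your argument reconstructs exactly that classical Radon--Nikodym argument --- the commutant factorization $T = T^{1/2}T^{1/2}$ for well-definedness of $T \mapsto \phi_T$ into $[0,\phi]$, minimality of the Stinespring representation for injectivity and for verifying $T \in \pi(\mathcal{A})^\prime$, and the sesquilinear form $\big[\xi,\xi\big] \le \langle \xi,\xi\rangle_{\mathcal{K}}$ (yielding well-definedness via Cauchy--Schwarz, boundedness, and $0 \le T \le 1_{\mathcal{K}}$) for surjectivity. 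No gaps; nothing further is needed.
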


%%%%%%%%%%%%%%%%%%%%%%%%%%%%%%%%%%%%%%%%%%%%%%%%%%%%%%%%%%%%%%%%%%%%%%%%%%%%%%%%%%%%%%%%%%%%

\subsection{Paschke's dilations of a completely positive map}
In this subsection, we recall some results related to the theory of Paschke's dilation of a completely positive map.
 
\begin{definition} [{\cite[Definition 2.1]{Paschke}}] \label{def;phm}
Let $\mathcal{B}$ be a unital $C^\ast$-algebra. Then a pre-Hilbert $\mathcal{B}$-module $\mathcal{X}$ is a right $\mathcal{B}$-module equipped with a conjugate-bilinear map $\la \cdot, \cdot \ra : \mathcal{X} \times \mathcal{X} \rightarrow \mathcal{B}$ satisfying:
\begin{enumerate}
\item $\la x, x \ra \geq 0$ for all $x \in \mathcal{X}$;
\item $\la x, x \ra = 0$ if and only if $x = 0$;
\item $\la x, y \ra = \la y, x \ra^*$ for all $x, y \in \mathcal{X}$;
\item $\la x\cdot b, y \ra = \la x, y \ra b$ for all $x, y \in \mathcal{X}$ and $b \in \mathcal{B}$.
\end{enumerate}	
\end{definition}

\noindent
For a pre-Hilbert $\mathcal{B}$-module $\mathcal{X}$, we define a norm on $\mathcal{X}$ by $ \big \| x \big \|_\mathcal{X} = \big \|  \la x, x \ra \big \|^{\frac{1}{2}}.$ 

\begin{definition} [{\cite[Definition 2.4]{Paschke}}] \label{def;hm}
Let $\mathcal{B}$ be a unital $C^\ast$-algebra. Then a pre-Hilbert $\mathcal{B}$-module $\mathcal{X}$ which is complete with respect to the norm $\| \cdot \|_\mathcal{X}$ is called a Hilbert $\mathcal{B}$-module. 
\end{definition}

%Every bounded linear operator on a Hilbert space has an adjoint. However, for a  Hilbert $\mathcal{B}$-module (where $\mathcal{B}$ is a unital $C^\ast$-algebra) $\mathcal{X}$, it is not the case, and hence we consider the following set of operators on the Hilbert $\mathcal{B}$-module $\mathcal{X}$.

\begin{definition} [{\cite{Paschke}}] \label{def;ao}
Let $\mathcal{B}$ be a unital $C^\ast$-algebra and let $\mathcal{X}$ be a $\mathcal{B}$-module. Then a bounded operator $T : \mathcal{X} \rightarrow \mathcal{X}$ is called an \textit{adjointable operator}, if there exists a bounded operator $T^* : \mathcal{X} \rightarrow \mathcal{X}$ such that $\la Tx, y \ra = \la x, T^*y \ra$ for all $x, y \in \mathcal{X}$.
\end{definition}

\noindent
For a Hilbert $\mathcal{B}$-module $\mathcal{X}$, we denote the collection of all \textit{adjointable operators} on $\mathcal{X}$ by $\mathcal{P} ( \mathcal{X} )$. The set $\mathcal{P} ( \mathcal{X} )$ forms a $C^\ast$-algebra (see \cite{Paschke}). Now consider a unital $C^\ast$-algebra $\mathcal{A}$ and a $\ast$-representation $\sigma: \mathcal{A} \rightarrow \mathcal{P}( \mathcal{X} )$. For $e \in \mathcal{X}$, define a map $\phi : \mathcal{A} \rightarrow \mathcal{B}$ by
\begin{equation*}
\phi(a) := \big \langle \sigma(a)e, e \big \rangle  \; \; \text{for each} \; \; a \in \mathcal{A}.
\end{equation*} 
Then $\phi$ is a completely positive map. The following theorem, proved in~\cite{Paschke}, states that every completely positive map from $\mathcal{A}$ into $\mathcal{B}$ arises in the same way as above.

\begin{theorem} [{\cite[Theorem 5.2]{Paschke}}] \label{thm;pd}
Let $\mathcal{A}$ and $\mathcal{B}$ be unital \(C^*\)-algebras, and let $\phi : \mathcal{A} \rightarrow \mathcal{B}$ be a completely positive map. Then there exists a Hilbert $\mathcal{B}$-module $\mathcal{X}$, a \(^*\)-representation $\sigma : \mathcal{A} \rightarrow \mathcal{P}\big ( \mathcal{X} \big )$, and $e \in \mathcal{X}$ such that $\phi(a) = \big \langle \sigma(a)e, e \big \rangle$ for all $a \in \mathcal{A}$. Moreover, the set $\big \{ \sigma(a)(eb) \; \; : \; \;  a \in \mathcal{A}, b \in \mathcal{B} \big \}$ spans a dense subspace of $\mathcal{X}$.
\end{theorem}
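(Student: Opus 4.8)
The plan is to reconstruct $\mathcal{X}$ by a Hilbert-module version of the GNS/Stinespring construction, built directly from the algebraic tensor product. I would start with $\mathcal{A}\otimes\mathcal{B}$ (algebraic tensor product over $\mathbb{C}$), regarded as a right $\mathcal{B}$-module through $(a\otimes b)\cdot b' = a\otimes(bb')$, and define a conjugate-bilinear $\mathcal{B}$-valued form on elementary tensors by
\[
\langle a_1\otimes b_1,\, a_2\otimes b_2\rangle := b_2^{*}\,\phi(a_2^{*}a_1)\, b_1,
\]
extended bilinearly. With this convention, property (3) of Definition~\ref{def;phm} follows from $\phi(x^{*})=\phi(x)^{*}$, and property (4) is immediate from the definition of the module action, so the only substantive axioms left are the positivity conditions (1) and (2).

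The crux of the argument is positivity, and this is where I expect the main obstacle to lie. For $u=\sum_{i=1}^{n} a_i\otimes b_i$ one computes, after relabeling the dummy indices,
\[
\langle u,u\rangle = \sum_{i,j} b_i^{*}\,\phi(a_i^{*}a_j)\, b_j,
\]
which is exactly the compression of the matrix $\big[\phi(a_i^{*}a_j)\big]_{i,j}\in\mathrm{M}_n(\mathcal{B})$ by the column $(b_1,\dots,b_n)^{\mathrm t}$. Since $\big[a_i^{*}a_j\big]_{i,j}=C^{*}C\geq 0$ for the matrix $C$ whose first row is $(a_1,\dots,a_n)$, complete positivity of $\phi$ (applied to $\phi_n$ on $\mathrm{M}_n(\mathcal{A})$) forces $\big[\phi(a_i^{*}a_j)\big]_{i,j}\geq 0$, and compressing a positive matrix over a $C^{*}$-algebra by a vector preserves positivity. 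This yields $\langle u,u\rangle\geq 0$, and it is precisely here that complete positivity, rather than mere positivity of $\phi$, is indispensable.

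Next I would pass from a semi-inner product to a genuine one. After recording the $\mathcal{B}$-valued Cauchy--Schwarz inequality $\langle u,v\rangle^{*}\langle u,v\rangle \leq \|\langle u,u\rangle\|\,\langle v,v\rangle$, the set $N=\{u:\langle u,u\rangle=0\}$ is seen to be a right submodule on which the form vanishes identically, so the form descends to the quotient $(\mathcal{A}\otimes\mathcal{B})/N$, making it a pre-Hilbert $\mathcal{B}$-module in the sense of Definition~\ref{def;phm}. Completing with respect to the norm $\|x\|_{\mathcal{X}}=\|\langle x,x\rangle\|^{1/2}$ then produces the Hilbert $\mathcal{B}$-module $\mathcal{X}$; checking that the module action and the form extend continuously to the completion is routine.

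It then remains to produce $\sigma$ and $e$. I would define $\sigma(a)$ on elementary tensors by $\sigma(a)(a'\otimes b)=(aa')\otimes b$. A one-line computation gives $\langle\sigma(a)u,v\rangle=\langle u,\sigma(a^{*})v\rangle$, so $\sigma(a)$ is adjointable with $\sigma(a)^{*}=\sigma(a^{*})$. To show $\sigma(a)$ is well defined on the quotient and bounded, I would apply the positivity step above to the factorization $a_i^{*}(\|a\|^{2}1-a^{*}a)a_j=(ca_i)^{*}(ca_j)$ with $c=(\|a\|^{2}1-a^{*}a)^{1/2}$, which yields $\langle\sigma(a)u,\sigma(a)u\rangle\leq\|a\|^{2}\langle u,u\rangle$ and simultaneously shows $\sigma(a)N\subseteq N$. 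One checks that $\sigma$ is a unital $\ast$-homomorphism into $\mathcal{P}(\mathcal{X})$ and extends it by continuity. Taking $e$ to be the class of $1_{\mathcal{A}}\otimes 1_{\mathcal{B}}$ gives $\langle\sigma(a)e,e\rangle=\phi(a)$ directly. Finally, since $\sigma(a)(e\cdot b)=a\otimes b$, the set $\{\sigma(a)(eb):a\in\mathcal{A},\,b\in\mathcal{B}\}$ is exactly the image of the elementary tensors, which is dense in $\mathcal{X}$ by construction, establishing the minimality claim.
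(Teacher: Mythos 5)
Your proposal is correct and follows essentially the same route as the paper's source for this statement: the paper states the theorem without proof, citing Paschke, and Paschke's own argument is exactly your GNS-type construction on the algebraic tensor product $\mathcal{A}\otimes\mathcal{B}$ with the $\mathcal{B}$-valued semi-inner product $\langle a_1\otimes b_1, a_2\otimes b_2\rangle = b_2^{*}\phi(a_2^{*}a_1)b_1$, positivity via complete positivity of $\phi$, quotient by the null submodule, completion, left-multiplication representation, and $e=[1_{\mathcal{A}}\otimes 1_{\mathcal{B}}]$. One small slip that does not affect the argument: under the paper's convention $\langle x\cdot b, y\rangle = \langle x, y\rangle b$, the Cauchy--Schwarz inequality should place the norm on the second argument, $\langle u,v\rangle^{*}\langle u,v\rangle \leq \big\|\langle v,v\rangle\big\|\,\langle u,u\rangle$, which still yields $\langle n,v\rangle=0$ for $n$ in the null space, as you need.
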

\noindent
The dilation obtained in Theorem \ref{thm;pd} is called Paschke's dilation for a completely positive map $\phi$, and we denote this dilation by the triple $ \big (\sigma, e, \mathcal{X} \big )$. The property that the set $\big \{ \sigma(a)(eb) \; \; : \; \;  a \in \mathcal{A}, b \in \mathcal{B} \big \}$ spans a dense subspace of $\mathcal{X}$ makes this dilation unique in the following sense. 

\begin{theorem} [{\cite[Theorem 2.8]{BK3}}] \label{thm;umpd}
Let $\mathcal{A}$ and $\mathcal{B}$ be unital \(C^*\)-algebras, and let $\phi : \mathcal{A} \rightarrow \mathcal{B}$ be a completely positive map. Suppose $ \big (\sigma_i, e_i, \mathcal{X}_i \big )_{i =1, 2}$ be two Paschke's dilation for $\phi$, as given in Theorem \ref{thm;pd}. Then there exists a Hilbert module isomorphism $W : \mathcal{X}_1 \rightarrow \mathcal{X}_2$ such that $We_1 = e_2$, and $W \sigma_1(a) W^\ast = \sigma_2(a)$ for all $a \in \mathcal{A}$.
\end{theorem}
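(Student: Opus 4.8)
The plan is to mimic the classical uniqueness argument for the GNS/Stinespring construction (Proposition~\ref{prop;umsd}), carried out at the level of Hilbert $\mathcal{B}$-modules. The density assertion in Theorem~\ref{thm;pd} is exactly what makes each dilation ``minimal'', and it is what will let me define $W$ on a dense submodule and extend by continuity. Concretely, I would first define $W$ on the algebraic span $\mathcal{X}_1^0 := \mathrm{span}\{\sigma_1(a)(e_1 b) : a \in \mathcal{A},\, b \in \mathcal{B}\}$ by the rule
\begin{equation*}
W\Big(\sum_k \sigma_1(a_k)(e_1 b_k)\Big) := \sum_k \sigma_2(a_k)(e_2 b_k),
\end{equation*}
and then show that this is a well-defined, inner-product-preserving module map which extends to the desired isomorphism.

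The crux is the following inner-product computation, carried out identically in both modules. Using that each $\sigma_i$ is a $\ast$-representation into $\mathcal{P}(\mathcal{X}_i)$ (so $\sigma_i(a)^* = \sigma_i(a^*)$ and $\sigma_i(a^*)\sigma_i(a') = \sigma_i(a^* a')$), that adjointable operators are module maps, and the module axioms (3)--(4) of Definition~\ref{def;phm}, one obtains
\begin{equation*}
\big\langle \sigma_i(a)(e_i b),\, \sigma_i(a')(e_i b')\big\rangle_{\mathcal{X}_i}
= b'^{*}\,\big\langle e_i,\, \sigma_i(a^* a') e_i\big\rangle\, b
= b'^{*}\,\phi(a'^{*}a)\, b,
\end{equation*}
where the last step uses $\phi(c) = \langle \sigma_i(c)e_i, e_i\rangle$ and that $\phi$ preserves adjoints. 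The decisive observation is that the final expression $b'^{*}\phi(a'^{*}a)b$ depends only on $\phi$ and on $a,a',b,b'$, and in particular \emph{not} on the index $i$. Extending linearly, this shows that $W$ preserves the $\mathcal{B}$-valued inner product on $\mathcal{X}_1^0$.

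Inner-product preservation then does the bulk of the work at once. First it gives well-definedness: if an element $\sum_k \sigma_1(a_k)(e_1 b_k)$ equals $0$, then its image has zero inner product with itself, hence zero norm, hence is $0$; the same estimate shows $W$ is isometric for $\|\cdot\|_{\mathcal{X}}$ and therefore bounded, so it extends uniquely to a bounded map $W : \mathcal{X}_1 \to \mathcal{X}_2$. That $W$ is a right $\mathcal{B}$-module map is checked on generators, since $\sigma_1(a)(e_1 b)\cdot c = \sigma_1(a)(e_1 (bc))$ is sent to $\sigma_2(a)(e_2(bc)) = \sigma_2(a)(e_2 b)\cdot c$, and then extended by continuity. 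Surjectivity follows because the range of $W$ contains the dense submodule $\mathrm{span}\{\sigma_2(a)(e_2 b)\}$ of $\mathcal{X}_2$ and is closed, being the isometric image of a complete module. A surjective inner-product-preserving map is adjointable with $W^* = W^{-1}$ (indeed $\langle Wx, y\rangle = \langle Wx, WW^{-1}y\rangle = \langle x, W^{-1}y\rangle$), so $W$ is the required Hilbert module isomorphism.

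It remains to verify the two intertwining identities, again on generators. Taking $a = 1_\mathcal{A}$ and $b = 1_\mathcal{B}$ and using that $\sigma_i$ is unital gives $W e_1 = W\big(\sigma_1(1)(e_1\cdot 1)\big) = \sigma_2(1)(e_2 \cdot 1) = e_2$. For the covariance, for all $a, a' \in \mathcal{A}$ and $b \in \mathcal{B}$,
\begin{equation*}
W\sigma_1(a)\big(\sigma_1(a')(e_1 b)\big) = W\big(\sigma_1(aa')(e_1 b)\big) = \sigma_2(aa')(e_2 b) = \sigma_2(a)\, W\big(\sigma_1(a')(e_1 b)\big),
\end{equation*}
so $W\sigma_1(a) = \sigma_2(a)W$ on the dense submodule and hence on all of $\mathcal{X}_1$; conjugating by the unitary $W$ gives $W\sigma_1(a)W^* = \sigma_2(a)$. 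The only genuinely delicate point is the inner-product computation together with keeping the one-sided module conventions of Definition~\ref{def;phm} straight (which slot the coefficients $b, b'$ come out of, and on which side); once that identity is in hand, well-definedness, isometry, and surjectivity all follow formally, exactly as in the scalar GNS uniqueness argument.
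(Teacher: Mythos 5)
The paper itself contains no proof of this statement---it is imported verbatim from \cite{BK3}---so there is nothing internal to compare against; judged on its own, your proof is correct, and it is precisely the standard GNS/Stinespring-style uniqueness argument one expects here: the computation $\big\langle \sigma_i(a)(e_ib), \sigma_i(a')(e_ib')\big\rangle = b'^*\phi(a'^*a)b$ (which you carry out correctly under the paper's one-sided convention $\langle xb, y\rangle = \langle x, y\rangle b$) makes the inner product of generators dilation-independent, and well-definedness, isometry, surjectivity, adjointability of $W$, and the intertwining relations all follow as you say. The only hypothesis you invoke that is not literally in the statement of Theorem \ref{thm;pd} is unitality of $\sigma_i$, but this is automatic and worth a one-line remark: $\sigma_i(1_\mathcal{A})$ is an adjointable self-adjoint idempotent whose range is closed and contains every generator $\sigma_i(a)(e_ib)$, so by density $\sigma_i(1_\mathcal{A}) = \mathrm{id}_{\mathcal{X}_i}$.
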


\noindent
For a given completely positive map $\phi : \mathcal{A} \rightarrow \mathcal{B}$ and a Paschke's dilation  $ \big (\sigma, e, \mathcal{X} \big )$, in view of Theorem \ref{thm;umpd}, we call $ \big (\sigma, e, \mathcal{X} \big )$ the Paschke's dilation for $\phi$. 

Let $\mathcal{B}$ be a unital $C^\ast$-algebra, and let $\mathcal{X}$ be a pre-Hilbert $\mathcal{B}$-module. Define $\mathcal{X}^\prime$ as 
\begin{equation} \label{eq; X prime}
\mathcal{X}^\prime := \big \{ f: \mathcal{X} \rightarrow \mathcal{B} \; \; : \; \;  f \; \;  \text{is a bounded $\mathcal{B}$-module map} \big \}.
\end{equation} 
For each fixed $x \in \mathcal{X}$, we define a map $\hat{x} : \mathcal{X} \rightarrow \mathcal{B}$ as $\hat{x}(y) := \langle y, x \rangle$. If $\mathcal{X}^\prime = \hat{\mathcal{X}} := \{\hat{x} \; \; : \; \; x \in \mathcal{X} \}$, then $\mathcal{X}$ is said to be self-dual. Further, if $\mathcal{B}$ is a von Neumann algebra, then $\mathcal{X}^\prime$ is also a Hilbert $\mathcal{B}$-module. Moreover, the $\mathcal{B}$-valued inner product on $\mathcal{X}$ extends to a $\mathcal{B}$-valued inner product on~$\mathcal{X}^\prime$.

\begin{theorem} [{\cite[Theorem 3.2]{Paschke}}] \label{thm;xprime}
Let $\mathcal{B}$ be a von Neumann algebra and $\mathcal{X}$ be a pre-Hilbert $\mathcal{B}$-module. Then the $\mathcal{B}$-valued inner product $\langle \cdot, \cdot \rangle$ on $\mathcal{X} \times \mathcal{X}$ extends to $\mathcal{X}^\prime \times \mathcal{X}^\prime$ in such a way as to make $\mathcal{X}^\prime$ into a self-dual Hilbert $\mathcal{B}$-module. Also, it satisfies $\langle \hat{x}, f \rangle = f(x)$ for $x \in \mathcal{X}$, $f \in \mathcal{X}^\prime$.
\end{theorem}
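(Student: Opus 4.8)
The plan is to construct the $\mathcal{B}$-valued inner product on $\mathcal{X}^\prime$ fibrewise over the normal states of $\mathcal{B}$ and then glue the fibres together, exploiting that $\mathcal{B}$, being a von Neumann algebra, is the dual of its predual $\mathcal{B}_\ast$. First I would record the elementary structure: the embedding $x \mapsto \hat{x}$ carries $\mathcal{X}$ into $\mathcal{X}^\prime$ as a $\mathcal{B}$-module map, and it is isometric for the operator norm $\|f\| = \sup_{\|y\|_{\mathcal{X}} \le 1}\|f(y)\|$, since $\|\hat{x}\| = \|\langle \cdot, x\rangle\| = \|x\|_{\mathcal{X}}$ by Cauchy--Schwarz and by testing at $y = x$. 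Moreover $\mathcal{X}^\prime$ is complete in the operator norm, because a uniform limit of bounded $\mathcal{B}$-module maps is again one and $\mathcal{B}$ is complete. The target is a map $\langle \cdot, \cdot\rangle : \mathcal{X}^\prime \times \mathcal{X}^\prime \to \mathcal{B}$ extending the inner product of $\mathcal{X}$, satisfying the axioms of Definition~\ref{def;phm}, obeying $\langle \hat{x}, f\rangle = f(x)$, and inducing the operator norm through $\|f\|^2 = \|\langle f, f\rangle\|$; self-duality is then read off at the end.

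For the construction, fix a normal state $\rho \in \mathcal{B}_\ast$. The form $(x,y) \mapsto \rho(\langle x, y\rangle)$ is a positive semidefinite sesquilinear form on $\mathcal{X}$; quotienting by its null space and completing yields a Hilbert space $\mathcal{H}_\rho$ with a contraction $\eta_\rho : \mathcal{X} \to \mathcal{H}_\rho$ having dense range, the inner product $\langle \cdot, \cdot\rangle_\rho$ being linear in the first slot. Given $f \in \mathcal{X}^\prime$, the assignment $\eta_\rho(x) \mapsto \rho(f(x))$ is a well-defined bounded linear functional on the range of $\eta_\rho$: its boundedness follows from the Cauchy--Schwarz inequality for bounded module maps, $f(x)^* f(x) \le \|f\|^2 \langle x, x\rangle$, which gives $|\rho(f(x))| \le \|f\|\,\|\eta_\rho(x)\|_\rho$. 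By the Riesz lemma it is represented by a vector $\xi_\rho(f) \in \mathcal{H}_\rho$, so that $\rho(f(x)) = \langle \eta_\rho(x), \xi_\rho(f)\rangle_\rho$, and one checks $\xi_\rho(\hat{x}) = \eta_\rho(x)$. I would then define $\langle f, g\rangle$ to be the unique element of $\mathcal{B}$ with $\rho(\langle f, g\rangle) = \langle \xi_\rho(f), \xi_\rho(g)\rangle_\rho$ for every normal $\rho$. The defining identity $\langle \hat{x}, f\rangle = f(x)$ then falls out, since $\rho(\langle \hat{x}, f\rangle) = \langle \eta_\rho(x), \xi_\rho(f)\rangle_\rho = \rho(f(x))$, and restricting to $\hat{\mathcal{X}} \times \hat{\mathcal{X}}$ recovers the original inner product.

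It remains to verify the inner-product axioms and self-duality. Conjugate symmetry, the $\mathcal{B}$-linearity in the first variable, and positivity of $\langle f, f\rangle$ all transfer from the fibrewise Hilbert-space inner products, because each holds after applying every normal state and the normal states separate $\mathcal{B}$; definiteness and the identity $\|f\|^2 = \|\langle f, f\rangle\|$ follow by comparing $\sup_\rho \langle \xi_\rho(f), \xi_\rho(f)\rangle_\rho$ with the operator norm of $f$. Completeness in the resulting module norm then coincides with the operator-norm completeness already noted. Finally, to see that $\mathcal{X}^\prime$ is self-dual, I take $F \in (\mathcal{X}^\prime)^\prime$, restrict it along the embedding to get $F \circ (\,\widehat{\;\cdot\;}\,) \in \mathcal{X}^\prime$, say equal to some $f$, and argue $F = \hat{f}$: the two agree on $\hat{\mathcal{X}}$ by the pairing formula $\langle \hat{x}, f\rangle = f(x)$ and hence everywhere, using that $\hat{\mathcal{X}}$ is dense in $\mathcal{X}^\prime$ in the relevant topology.

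The main obstacle I anticipate is the gluing step: showing that the fibrewise Riesz representatives $\xi_\rho(f)$ are compatible across different $\rho$, so that $\rho \mapsto \langle \xi_\rho(f), \xi_\rho(g)\rangle_\rho$ is genuinely a normal linear functional on $\mathcal{B}$ and therefore given by a single element $\langle f, g\rangle \in \mathcal{B}$. This is exactly where the von Neumann algebra hypothesis is indispensable: one uses weak-$\ast$ compactness of bounded subsets of $\mathcal{B}$ together with density of $\hat{\mathcal{X}}$ to approximate $f$ by bounded nets $\hat{x}_\alpha$ weak-$\ast$ and to pass to the limit in $\langle x_\alpha, y\rangle$, while checking that positivity and the norm estimates survive the limit. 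The remaining verifications are routine module computations.
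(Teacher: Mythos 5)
First, a point of order: the paper does not prove this statement at all --- it is quoted, with attribution, as Theorem 3.2 of Paschke's paper --- so your proposal can only be judged against Paschke's original argument and on its own internal correctness. Judged that way, it has a genuine gap, and it sits exactly where you flag ``the main obstacle'': the gluing step is not a deferred verification, it \emph{is} the theorem. To define $\langle f,g\rangle\in\mathcal{B}$ via $\mathcal{B}\cong(\mathcal{B}_\ast)^\ast$ you must show that $\rho\mapsto\langle\xi_\rho(f),\xi_\rho(g)\rangle_\rho$ extends to a bounded \emph{linear} functional on $\mathcal{B}_\ast$; since normal states are not a vector space, the minimal requirement is affinity under convex combinations, and already this is not routine. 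Indeed, for $\rho=t\rho_1+(1-t)\rho_2$ there is a canonical isometry $W:\mathcal{H}_\rho\to\mathcal{H}_{\rho_1}\oplus\mathcal{H}_{\rho_2}$ with $W\eta_\rho(x)=(\sqrt{t}\,\eta_{\rho_1}(x),\sqrt{1-t}\,\eta_{\rho_2}(x))$, and since $\langle W\eta_\rho(x),v_f\rangle=\rho(f(x))=\langle W\eta_\rho(x),W\xi_\rho(f)\rangle$ for the tuple $v_f:=(\sqrt{t}\,\xi_{\rho_1}(f),\sqrt{1-t}\,\xi_{\rho_2}(f))$, one gets $W\xi_\rho(f)=Pv_f$, where $P$ is the projection onto the range of $W$. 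Hence
\begin{equation*}
\langle\xi_\rho(f),\xi_\rho(f)\rangle_\rho
= t\,\langle\xi_{\rho_1}(f),\xi_{\rho_1}(f)\rangle_{\rho_1}
+(1-t)\,\langle\xi_{\rho_2}(f),\xi_{\rho_2}(f)\rangle_{\rho_2}
-\big\|(1-P)v_f\big\|^2,
\end{equation*}
so affinity holds precisely when $v_f$ lies in the range of $W$ for every $f\in\mathcal{X}^\prime$, i.e.\ when the Riesz representatives of $f$ at different states can be approximated \emph{simultaneously} by elements of $\mathcal{X}$. That simultaneous-approximation statement is essentially equivalent to the theorem being proved: once the theorem is available, $\xi_\rho(f)$ is just the localization of $f$ and the defect term vanishes, but only a posteriori. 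The difficulty has been relocated, not resolved.

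Second, the tool you propose for closing the gap is circular. Approximating $f$ by a bounded net $\hat{x}_\alpha$ in the pointwise $\sigma$-weak sense presupposes that the unit ball of $\hat{\mathcal{X}}$ is dense in the unit ball of $\mathcal{X}^\prime$ for that topology; in Paschke's development such density statements come \emph{after} the self-dual structure exists, and they are certainly not norm-density ($\hat{\mathcal{X}}$ is usually far from norm-dense in $\mathcal{X}^\prime$; e.g.\ $\mathcal{X}=c_0$ over $\mathcal{B}=\ell^\infty$ gives $\mathcal{X}^\prime\cong\ell^\infty$). The same unproven weak density, together with an unjustified weak continuity of the arbitrary bounded module map $F\in(\mathcal{X}^\prime)^\prime$, is what your final self-duality argument leans on: agreement of $F$ and $\hat{f}$ on $\hat{\mathcal{X}}$ does not propagate to all of $\mathcal{X}^\prime$ without such continuity. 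A workable repair of your localization idea is to use one representation instead of all states at once: fix a faithful normal representation $\mathcal{B}\subseteq\mathcal{B}(\mathcal{H}_0)$, form $\mathcal{K}:=\mathcal{X}\otimes_{\mathcal{B}}\mathcal{H}_0$, observe that the commutant $\mathcal{B}^\prime$ acts on $\mathcal{K}$ by $x\otimes h\mapsto x\otimes b^\prime h$, identify $\mathcal{X}^\prime$ with the space of operators $T\in\mathcal{B}(\mathcal{H}_0,\mathcal{K})$ intertwining the two $\mathcal{B}^\prime$-actions, and set $\langle S,T\rangle:=S^\ast T$; this operator commutes with $\mathcal{B}^\prime$ and therefore lies in $\mathcal{B}^{\prime\prime}=\mathcal{B}$ by the double commutant theorem, which accomplishes exactly what the fibrewise gluing could not.
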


By using the map $x \mapsto \hat{x}$ for $x \in \mathcal{X}$, we have $\mathcal{X}$ as a submodule of $\mathcal{X}^\prime$. The following proposition extends an adjointable operator on $\mathcal{X}$ to an adjointable operator on $\mathcal{X}^\prime$.

\begin{proposition} [{\cite[Proposition 3.6, Corollary 3.7]{Paschke}}] \label{prop;pext}
Let $\mathcal{B}$ be a von Neumann algebra and $\mathcal{X}$ be a pre-Hilbert $\mathcal{B}$-module, then each $T \in \mathcal{P} ( \mathcal{X})$ extends to a unique $\widetilde{T} \in \mathcal{P} ( \mathcal{X}^\prime )$. The map $T \mapsto \widetilde{T}$ is a \(^*\)-isomorphism of $\mathcal{P} ( \mathcal{X} )$ into $\mathcal{P} ( \mathcal{X}^\prime )$. Moreover, $\widetilde{T}\hat{x} = \widehat{Tx}$ for all $x \in \mathcal{X}$.
\end{proposition}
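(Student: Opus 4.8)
The plan is to produce the extension by an explicit formula and then extract every assertion from the defining pairing $\la \hat{x}, f \ra = f(x)$ of Theorem~\ref{thm;xprime}. Given $T \in \mathcal{P}(\mathcal{X})$ with adjoint $T^*$, I would set
\[
\widetilde{T} f := f \circ T^* \qquad (f \in \mathcal{X}^\prime).
\]
Since $T^*$ is a bounded $\mathcal{B}$-module map on $\mathcal{X}$ and $f$ is a bounded $\mathcal{B}$-module map $\mathcal{X} \to \mathcal{B}$, the composite $f \circ T^*$ again lies in $\mathcal{X}^\prime$, with $\|\widetilde{T} f\| \le \|T\|\,\|f\|$; so $\widetilde{T}$ is a bounded $\mathcal{B}$-module endomorphism of $\mathcal{X}^\prime$. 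The first routine check is the extension identity: for $x, y \in \mathcal{X}$ one has $(\widetilde{T}\hat{x})(y) = \hat{x}(T^* y) = \la T^* y, x \ra = \la y, Tx \ra = \widehat{Tx}(y)$, whence $\widetilde{T}\hat{x} = \widehat{Tx}$. This is precisely the ``moreover'' clause and shows $\widetilde{T}$ extends $T$ under the embedding $x \mapsto \hat{x}$.

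The conceptual core is adjointability of $\widetilde{T}$ on $\mathcal{X}^\prime$, and this is exactly where the von Neumann hypothesis on $\mathcal{B}$ enters through the self-duality of $\mathcal{X}^\prime$ (Theorem~\ref{thm;xprime}). For fixed $g \in \mathcal{X}^\prime$, the assignment $f \mapsto \la \widetilde{T} f, g \ra$ is a bounded $\mathcal{B}$-module map $\mathcal{X}^\prime \to \mathcal{B}$ (module-linearity uses that $\widetilde{T}$ is a module map together with axiom~(4) of Definition~\ref{def;phm}); by self-duality it is represented as $\la \cdot, h \ra$ for a unique $h \in \mathcal{X}^\prime$, and setting $(\widetilde{T})^* g := h$ exhibits an adjoint, so $\widetilde{T} \in \mathcal{P}(\mathcal{X}^\prime)$. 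To pin down this adjoint I would first isolate a non-degeneracy principle: since $\la h, \hat{y}\ra = (h(y))^*$, an element $h \in \mathcal{X}^\prime$ with $\la h, \hat{y}\ra = 0$ for all $y \in \mathcal{X}$ must be $0$. From this, any $S \in \mathcal{P}(\mathcal{X}^\prime)$ with $S\hat{x} = 0$ for all $x$ satisfies first $S^* \hat{x} = 0$ (because $\la S^*\hat{x}, \hat{y}\ra = \la \hat{x}, S\hat{y}\ra = 0$) and then $(Sg)(x) = \la \hat{x}, Sg\ra = \la S^*\hat{x}, g\ra = 0$, so $S = 0$; thus an adjointable operator is determined by its values on $\hat{\mathcal{X}}$, giving both the uniqueness of $\widetilde{T}$ and a tool to identify adjoints.

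Comparing $\la \widetilde{T} f, \hat{y}\ra = (f(T^* y))^* = \la f, \widetilde{T^*}\hat{y}\ra$ for all $f$ shows $(\widetilde{T})^*$ and $\widetilde{T^*}$ agree on $\hat{\mathcal{X}}$, hence coincide by uniqueness, so $T \mapsto \widetilde{T}$ is $*$-preserving. Multiplicativity is immediate from $\widetilde{ST} f = f \circ (ST)^* = f \circ (T^* S^*) = \widetilde{S}\,\widetilde{T} f$, and linearity follows from the formula (with only bookkeeping needed for the conjugate-linear convention in the inner product); injectivity holds because $\widetilde{T} = 0$ forces $\widehat{Tx} = 0$, hence $Tx = 0$ for all $x$ and $T = 0$. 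An injective $*$-homomorphism of $C^\ast$-algebras is isometric, so $T \mapsto \widetilde{T}$ is the desired isometric $*$-isomorphism of $\mathcal{P}(\mathcal{X})$ into $\mathcal{P}(\mathcal{X}^\prime)$. I expect the genuine obstacle to be the adjointability step: existence of $(\widetilde{T})^*$ on all of $\mathcal{X}^\prime$ is not formal and rests entirely on self-duality of $\mathcal{X}^\prime$; once adjoints are available, every remaining identity collapses to a computation on $\hat{\mathcal{X}}$ together with the non-degeneracy principle above.
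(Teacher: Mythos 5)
The paper does not prove this proposition at all: it is quoted as a preliminary from Paschke's 1973 paper (Proposition 3.6 and Corollary 3.7 there), so there is no in-paper argument to compare against. Your proposal is correct and essentially reconstructs Paschke's own argument: the explicit extension $\widetilde{T}f = f\circ T^{*}$, adjointability of $\widetilde{T}$ obtained from self-duality of $\mathcal{X}^{\prime}$ (which is exactly where the von Neumann hypothesis enters, via Theorem~\ref{thm;xprime}), and the observation that an adjointable operator on $\mathcal{X}^{\prime}$ is determined by its restriction to $\hat{\mathcal{X}}$, which yields both uniqueness and the identification $(\widetilde{T})^{*}=\widetilde{T^{*}}$. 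The only items you leave implicit are routine but worth noting: the adjoint $(\widetilde{T})^{*}g$ produced by self-duality must still be checked to be linear and bounded, since Definition~\ref{def;ao} requires the adjoint to be a bounded operator (boundedness follows from Cauchy--Schwarz: $\|(\widetilde{T})^{*}g\|^{2}=\|\langle \widetilde{T}(\widetilde{T})^{*}g,g\rangle\|\le \|T\|\,\|(\widetilde{T})^{*}g\|\,\|g\|$); and your final appeal to ``injective $*$-homomorphisms of $C^{*}$-algebras are isometric'' is slightly off when $\mathcal{X}$ is merely pre-Hilbert, since $\mathcal{P}(\mathcal{X})$ need not then be complete --- but isometry (which the proposition does not even assert) follows directly, because $\|\widetilde{T}\|\le\|T\|$ from the defining formula while $\widetilde{T}$ restricts to $T$ on the isometric copy $\hat{\mathcal{X}}$ of $\mathcal{X}$.
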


%\noindent
%The last equality $\widetilde{T}\hat{x} = \widehat{Tx}$, for all $x \in \mathcal{X}$ can be observed from the proof of \cite[Proposition 3.6]{Paschke}.

Let $\mathcal{A}$ be a unital \(C^*\)-algebra, and let $\mathcal{B}$ be a von Neumann algebra. Suppose $\phi : \mathcal{A} \rightarrow \mathcal{B}$ is a completely positive map. By following Theorem \ref{thm;pd}, we get the Paschke dilation for $\phi$ given by the triple $\big (\mathcal{X}, \sigma, e \big )$. Then by using Proposition \ref{prop;pext}, we define a $\ast$-representation $\widetilde{\sigma} : \mathcal{A} \rightarrow \mathcal{P}( \mathcal{X}^\prime )$ by $\widetilde{\sigma}(a) := \widetilde{\sigma(a)}$. For $T \in \mathcal{P} ( \mathcal{X}^\prime)$, define a linear map $\phi_T : \mathcal{A} \rightarrow \mathcal{B}$ by $\phi_T(a) := \big \langle T \widetilde{\sigma}(a) \hat{e}, \hat{e} \big  \rangle$ for $a \in \mathcal{A}$. In \cite{Paschke}, the author has proved the following Radon--Nikodym type result in the setting of Hilbert modules for completely positive maps.

\begin{theorem} [{\cite[Proposition 5.4]{Paschke}}] \label{thm;prnd}
Let $\mathcal{A}$ be a unital \(C^*\)-algebra, and let $\mathcal{B}$ be a von Neumann algebra. Suppose $\phi : \mathcal{A} \rightarrow \mathcal{B}$ is a completely positive map with the Paschke dilation given by the triple $\big (\mathcal{X}, \sigma, e \big )$. Then there is an affine order isomorphism of $\big \{ T \in  \widetilde{\sigma}(\mathcal{A})^\prime \; \; : \; \; 0\leq T \leq 1_{\mathcal{X}^\prime} \}$ onto $[0, \phi] = \big \{ \theta \in CP(\mathcal{A}, \mathcal{B}) \; \; : \; \; \theta \leq \phi \big \}$ which is given by the map $T \mapsto \phi_T(\cdot) := \big \langle T \widetilde{\sigma}(\cdot) \hat{e}, \hat{e} \big  \rangle$. 
\end{theorem}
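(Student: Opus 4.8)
The plan is to mirror, in the self-dual Hilbert module setting, the proof of the Stinespring Radon--Nikodym theorem (Theorem~\ref{thm;arnd}), with the commutant $\widetilde{\sigma}(\mathcal{A})'$ inside the $C^\ast$-algebra $\mathcal{P}(\mathcal{X}')$ playing the role of $\pi(\mathcal{A})'$. First I would check that $T \mapsto \phi_T$ really maps into $[0,\phi]$. For $T \in \widetilde{\sigma}(\mathcal{A})'$ with $0 \le T \le 1_{\mathcal{X}'}$, the square root $T^{1/2}$ computed in $\mathcal{P}(\mathcal{X}')$ again lies in $\widetilde{\sigma}(\mathcal{A})'$; setting $e' := T^{1/2}\hat{e}$ and moving $T^{1/2}$ across $\widetilde{\sigma}(a)$ gives $\phi_T(a) = \langle \widetilde{\sigma}(a) e', e' \rangle$, which is completely positive since this is exactly the Paschke form of Theorem~\ref{thm;pd} for the triple $(\widetilde{\sigma}, e', \mathcal{X}')$. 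The same computation applied to $1_{\mathcal{X}'} - T$ shows $\phi - \phi_T = \phi_{1_{\mathcal{X}'}-T}$ is completely positive, where $\phi_{1_{\mathcal{X}'}} = \phi$ because $\langle \widetilde{\sigma}(a)\hat{e}, \hat{e}\rangle = \langle \widehat{\sigma(a)e}, \hat{e}\rangle = \langle \sigma(a)e, e\rangle = \phi(a)$ by Proposition~\ref{prop;pext} and Theorem~\ref{thm;xprime}. Thus $\phi_T \le \phi$. Affineness is immediate from linearity of the formula, and $T_1 \le T_2 \Rightarrow \phi_{T_1}\le\phi_{T_2}$ follows by the same square-root trick applied to $T_2 - T_1$.

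Next I would prove injectivity by showing that a self-adjoint $S \in \widetilde{\sigma}(\mathcal{A})'$ with $\phi_S = 0$ must vanish (the general case reduces to this via $S = T_1 - T_2$). Using $S\widetilde{\sigma}(a) = \widetilde{\sigma}(a)S$ and the module-map property of $S$, one computes $\langle S\widetilde{\sigma}(a)(\hat{e}b), \widetilde{\sigma}(a')(\hat{e}b')\rangle = (b')^\ast \phi_S((a')^\ast a) b = 0$. Since the vectors $\sigma(a)(eb)$ span a dense subspace of $\mathcal{X}$ by the minimality clause of Theorem~\ref{thm;pd}, norm continuity yields $\langle S\hat{x}, \hat{y}\rangle = 0$ for all $x, y \in \mathcal{X}$, whence $S\hat{x} = 0$ for every $x$ by Theorem~\ref{thm;xprime}. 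For arbitrary $f \in \mathcal{X}'$, self-adjointness gives $\langle Sf, \hat{y}\rangle = \langle f, S\hat{y}\rangle = 0$ for all $y$, so $Sf = 0$; hence $S = 0$.

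The surjectivity is the crux of the argument and the step I expect to be the main obstacle. Given $\theta \in [0,\phi]$, I would define on the dense submodule $\mathcal{X}_0$ spanned by the vectors $\sigma(a)(eb)$ the $\mathcal{B}$-valued form $[\sigma(a)(eb), \sigma(a')(eb')]_\theta := (b')^\ast\, \theta((a')^\ast a)\, b$, the exact analogue of $\langle \sigma(a)(eb), \sigma(a')(eb')\rangle = (b')^\ast \phi((a')^\ast a) b$. Complete positivity of $\theta$ makes $[\cdot,\cdot]_\theta$ positive semidefinite, and $\theta \le \phi$ gives the domination $0 \le [x,x]_\theta \le \langle x,x\rangle$; a $\mathcal{B}$-valued Cauchy--Schwarz inequality then shows the form is well defined (independent of the representation of each vector) and bounded. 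The hard part is to represent this dominated form by an operator: for fixed $x$ the map $y \mapsto [y,x]_\theta$ is a bounded $\mathcal{B}$-module functional, hence an element of $\mathcal{X}'$, and it is precisely the self-duality of $\mathcal{X}'$ (Theorem~\ref{thm;xprime}) that allows me to realize the form as $[x,y]_\theta = \langle T\hat{x}, \hat{y}\rangle$ for a bounded adjointable $T \in \mathcal{P}(\mathcal{X}')$. The bound $0 \le [\cdot,\cdot]_\theta \le \langle\cdot,\cdot\rangle$ then forces $0 \le T \le 1_{\mathcal{X}'}$, while the intertwining identity $[\sigma(a)x, y]_\theta = [x, \sigma(a^\ast)y]_\theta$---checked on generators from the $\ast$-homomorphism property of $\sigma$---upgrades, via the same density argument as above, to $T\widetilde{\sigma}(a) = \widetilde{\sigma}(a)T$, so $T \in \widetilde{\sigma}(\mathcal{A})'$. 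Finally, taking $x = \sigma(a)e$ and $y = e$ gives $\phi_T(a) = \langle T\widetilde{\sigma}(a)\hat{e}, \hat{e}\rangle = [\sigma(a)e, e]_\theta = \theta(a)$, so $\phi_T = \theta$. This representability of a dominated $\mathcal{B}$-valued form by an adjointable operator is exactly where the hypothesis that $\mathcal{B}$ is a von Neumann algebra is indispensable, and it is the reason the statement is phrased through the self-dual completion $\mathcal{X}'$ and $\widetilde{\sigma}$ rather than through $\mathcal{X}$ and $\sigma$; the order-reflecting property of the inverse follows by applying this construction to differences $\theta_2 - \theta_1$.
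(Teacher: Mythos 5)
This theorem is quoted in the paper as a preliminary result, cited to Paschke's Proposition 5.4, so the paper contains no proof of its own to compare against. Your proposal correctly reconstructs Paschke's original argument --- the square-root trick showing $T \mapsto \phi_T$ lands in $[0,\phi]$, injectivity via density of the span of $\sigma(\mathcal{A})(e\mathcal{B})$ in $\mathcal{X}$, and surjectivity by representing the dominated $\mathcal{B}$-valued form through the self-duality of $\mathcal{X}^\prime$ (where the von Neumann hypothesis on $\mathcal{B}$ is indeed essential) --- which is exactly the route the cited source takes.
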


%%%%%%%%%%%%%%%%%%%%%%%%%%%%%%%%%%%%%%%%%%%%%%%%%%%%%%%%%%%%%%%%%%%%%%%%%%%%%%%%%%%%%%%%%

\section{Radon--Nikodym derivative} \label{sec; RND}

The notion of partial action was introduced by Exel in~\cite{Exel-1994-Circle-action-Cst-alg} and McClanahan in~\cite{McClanahan-1995-K-theory-Par-act}. In this section, we briefly recall the definition of partial action from Exel's book~\cite{E} and then we discuss Radon-Nikodym type result for a collection of UCP maps, which are invariant under partial action.

\begin{definition}[{\cite[definition 11.4]{E}}]
\label{def-partial-action}
Let $G$ be a group and $\mathcal{A}$ be a unital $C^\ast$-algebra. A partial action of the group $G$ on the $C^\ast$\nobreakdash-algebra $\mathcal{A}$ is a pair \( \tau := \big ( \{ \mathcal{A}_g \}_{g \in G},  \{ \tau_g \}_{g \in G}  \big )\),
consisting of a family of closed two-sided ideals $\mathcal{A}_g \subseteq \mathcal{A}$ and $\ast$-isomorphisms $\tau_g : \mathcal{A}_{g^{-1}} \rightarrow \mathcal{A}_g$ for all $g \in G$, satisfying the following
\begin{enumerate}
    \item  $\mathcal{A}_1 = \mathcal{A}$ and $\tau_1$ is the identity map on $\mathcal{A}_1 = \mathcal{A}$;
    \item $\tau_g \circ \tau_h \subseteq \tau_{gh}$ for all $g, h \in G$.
\end{enumerate}
\end{definition}
\noindent The domain of \(\tau_{g}\circ \tau_{h}\) is given by \(\{x\in \mathcal{A}_{h^{-1}}: \tau_h(x) \in \mathcal{A}_{g^{-1}}\} = \tau^{-1}_h(\mathcal{A}_{g^{-1}})\). The condition \(\tau_g\circ \tau_h \subseteq \tau_{gh}\) says that \(\tau_g(\tau_h(x)) = \tau_{gh}(x)\) for \(x\in \textup{dom}(\tau_g\circ \tau_h)\), that is, \(\tau_{gh}\) is an extension of \(\tau_g\circ \tau_h\). A partial action of \(G\) on a Hilbert space\(\mathcal{H}\) is a pair \( \big ( \{ \mathcal{K}_{g} \}_{g \in G} ; \{ U_{g} \}_{g \in G} \big )\), where \(\mathcal{K}_g\) is a closed subspace of \(\mathcal{H}\) and \(U_g\colon \mathcal{K}_{g^{-1}} \to \mathcal{K}_{g}\) is an unitary isomorphism and satisfying Conditions~(1) and~(2) of Definition~\ref{def-partial-action}. In a similar way, one can define the partial action of \(G\) on a Hilbert \(\mathcal{A}\)-module \(\mathcal{X}\).

For a unital $C^\ast$-algebra $\mathcal{A}$ and a Hilbert space $\mathcal{H}$, we recall the following notations:
\begin{align*}
\cba &= \big \{ \phi : \mathcal{A} \rightarrow \mathcal{B}(\mathcal{H}) \; \; : \; \;  \phi \; \; \text{is a completely bounded map} \big \}; \\
\cpa &= \big \{ \phi : \mathcal{A} \rightarrow \mathcal{B}(\mathcal{H}) \; \; : \; \;  \phi \; \; \text{is a completely positive map} \big \}; \\
\ucpa &= \big \{ \phi : \mathcal{A} \rightarrow \mathcal{B}(\mathcal{H}) \; \; : \; \;  \phi \; \; \text{is a unital completely positive map} \big \}.
\end{align*}
Let $\tau = \big ( \{ \mathcal{A}_g \}_{g \in G},  \{ \tau_g \}_{g \in G}  \big )$ be a partial action of \(G\) on a unital \(C^*\)-algebra \(\mathcal{A}\). Consider the following sets:
\begin{align*}
\cpapg &= \big \{ \phi \in \cpa  \; \; : \; \;  \phi(a) = \phi (\tau_g(a)) \; \; \text{for all} \; g \in G \; \; \text{and} \; \; a \in \mathcal{A}_{g^{-1}} \big \}; \\
\ucpapg &= \big \{ \phi \in \ucpa \; \; : \; \;  \phi(a) = \phi (\tau_g(a)) \; \; \text{for all} \; g \in G \; \; \text{and} \; \; a \in \mathcal{A}_{g^{-1}} \big \}.
\end{align*}
In this section, we prove the Radon--Nikodym type results in the subcollection $\ucpapg$ in the setting of Stinespring's and Paschke's dilation of a completely positive map.

Let $\tau = \big ( \{ \mathcal{A}_g \}_{g \in G},  \{ \tau_g \}_{g \in G}  \big )$ be a partial action of a group $G$ on a unital $C^\ast$-algebra $\mathcal{A}$. Suppose $\phi \in \ucpapg$ and  $\big (\pi, V, \mathcal{K} \big )$ is the minimal Stinespring representation for $\phi$ (see Theorem \ref{thm;sd}). Then for all $g \in G$ and $a \in \mathcal{A}_{g^{-1}}$, we have
\begin{equation*}
V^\ast\pi(a)V = \phi(a) = \phi\big (\tau_g(a) \big ) = V^\ast\pi \big (\tau_g(a) \big )V. 
\end{equation*}
Now for each fixed $g \in G$, define a Hilbert subspace of the Hilbert space $\mathcal{K}$ as
\begin{equation} \label{eqn; K g inverse}
\mathcal{K}_{g^{-1}} := \big [ \pi \big (\mathcal{A}_g \big )V \mathcal{H} \big ] = \big [ \pi \big (\tau_g(\mathcal{A}_{g^{-1}}) \big) V \mathcal{H} \big ].
\end{equation}
Next, define a map $U_g : \mathcal{K}_{g^{-1}} \rightarrow \mathcal{K}_{g}$ as 
\begin{equation} \label{eqn; U g}
U_g \left ( \sum\limits^n_i \pi (a_i) Vh_i \right ) = \sum\limits^n_i \pi\big (\tau_{g^{-1}}(a_i) \big ) Vh_i,  \; \; \; \; \text{where} \; \; a_i \in \mathcal{A}_g \; \; \text{and} \; \; h_i \in \mathcal{H}.
\end{equation}
Then one can see that $U_g$ is unitary. Since $g \in G$ was arbitrarily chosen, we get the unitary $U_g : \mathcal{K}_{g^{-1}} \rightarrow \mathcal{K}_{g}$ for each $g \in G$. Thus for a fixed $\phi \in \ucpapg$, we get a partial action of the group $G$ on the Hilbert space $\mathcal{K}$ which is given by the pair 
\begin{equation} \label{eqn; U phi}
U^{G_\tau}_\phi := \big ( \{ \mathcal{K}_{g} \}_{g \in G} ; \{ U_{g} \}_{g \in G} \big ).
\end{equation}
A result motivated by Proposition~\ref{prop;umsd} is given below:

\begin{proposition}
Let $G$ be a group and $\mathcal{A}$ be a unital $C^\ast$-algebra. Let $\tau = \big ( \{ \mathcal{A}_g \}_{g \in G},  \{ \tau_g \}_{g \in G}  \big )$ be a partial action of $G$ on $\mathcal{A}$. Consider a Hilbert space $\mathcal{H}$ and $\phi \in \ucpapg$ with the minimal Stinespring representation for $\phi$ is given by $\big (\pi, V, \mathcal{K} \big )$. Let $U^{G_\tau}_\phi = \big ( \{ \mathcal{K}_{g} \}_{g \in G} ; \{ U_{g} \}_{g \in G} \big )$ be the partial action of the group $G$ on the Hilbert space $\mathcal{K}$ obtained from $\phi$ (as described in Equation \eqref{eqn; U phi}). Then for all $g \in G$ and $a \in \mathcal{A}_g$,  we have 
\begin{equation*}
U_g\pi(a)U_g^\ast =\pi\big (\tau_{g^{-1}}(a) \big ) \big|_{\mathcal{K}_g}.
\end{equation*}
\end{proposition}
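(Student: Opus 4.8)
The plan is to verify the intertwining relation on a dense spanning subspace of $\mathcal{K}_g$ and then extend it to all of $\mathcal{K}_g$ by boundedness. First I would identify the adjoint $U_g^\ast$. Since $U_g : \mathcal{K}_{g^{-1}} \to \mathcal{K}_g$ is a unitary, $U_g^\ast$ equals its inverse, and I claim this inverse is precisely $U_{g^{-1}} : \mathcal{K}_g \to \mathcal{K}_{g^{-1}}$, which (by applying the definition \eqref{eqn; U g} with $g$ replaced by $g^{-1}$) sends $\pi(b)Vh$, for $b \in \mathcal{A}_{g^{-1}}$, to $\pi(\tau_g(b))Vh$. This follows from the partial-action axioms, which give $\tau_g \circ \tau_{g^{-1}} = \mathrm{id}$ on $\mathcal{A}_g$ and $\tau_{g^{-1}} \circ \tau_g = \mathrm{id}$ on $\mathcal{A}_{g^{-1}}$ (the relevant domains being the full ideals because $\tau_{g^{-1}}$ maps $\mathcal{A}_g$ onto $\mathcal{A}_{g^{-1}}$ and conversely). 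Applying these on the dense spanning sets of $\mathcal{K}_{g^{-1}}$ and $\mathcal{K}_g$ shows $U_{g^{-1}}U_g = \mathrm{id}_{\mathcal{K}_{g^{-1}}}$ and $U_g U_{g^{-1}} = \mathrm{id}_{\mathcal{K}_g}$, so $U_g^\ast = U_{g^{-1}}$.

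Next comes the core computation. Recalling from \eqref{eqn; K g inverse} that $\mathcal{K}_g = \big[\pi(\mathcal{A}_{g^{-1}})V\mathcal{H}\big]$, I fix $a \in \mathcal{A}_g$ and a typical spanning vector $\xi = \pi(b)Vh \in \mathcal{K}_g$ with $b \in \mathcal{A}_{g^{-1}}$ and $h \in \mathcal{H}$, and evaluate $U_g\pi(a)U_g^\ast\xi$ in three moves. First, $U_g^\ast\xi = U_{g^{-1}}\big(\pi(b)Vh\big) = \pi(\tau_g(b))Vh$, with $\tau_g(b) \in \mathcal{A}_g$. Second, $\pi(a)U_g^\ast\xi = \pi\big(a\,\tau_g(b)\big)Vh$, and since $\mathcal{A}_g$ is a two-sided ideal the product $a\,\tau_g(b)$ again lies in $\mathcal{A}_g$, so $U_g$ is applicable to this vector. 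Third, $U_g\pi\big(a\,\tau_g(b)\big)Vh = \pi\big(\tau_{g^{-1}}(a\,\tau_g(b))\big)Vh$; using that $\tau_{g^{-1}}$ is a $\ast$-homomorphism on $\mathcal{A}_g$ and that $\tau_{g^{-1}}\tau_g(b) = b$, this simplifies to $\pi\big(\tau_{g^{-1}}(a)\,b\big)Vh = \pi(\tau_{g^{-1}}(a))\pi(b)Vh = \pi(\tau_{g^{-1}}(a))\,\xi$. Thus $U_g\pi(a)U_g^\ast$ and $\pi(\tau_{g^{-1}}(a))$ agree on every such $\xi$. Note also that $\pi(\tau_{g^{-1}}(a))$ leaves $\mathcal{K}_g$ invariant, since $\tau_{g^{-1}}(a) \in \mathcal{A}_{g^{-1}}$ and $\mathcal{A}_{g^{-1}}$ is an ideal, so the restriction $\pi(\tau_{g^{-1}}(a))\big|_{\mathcal{K}_g}$ on the right-hand side is well defined.

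Finally, because the vectors $\pi(b)Vh$ with $b \in \mathcal{A}_{g^{-1}}$ span a dense subspace of $\mathcal{K}_g$ and both $U_g\pi(a)U_g^\ast$ and $\pi(\tau_{g^{-1}}(a))\big|_{\mathcal{K}_g}$ are bounded operators, the identity extends by continuity to all of $\mathcal{K}_g$, which gives the claim. The only real subtlety—and the step I expect to require the most care—is the domain bookkeeping: ensuring at each stage that the element at hand lies in the ideal on which the partial isomorphism $\tau_g$ or $\tau_{g^{-1}}$, or the partial isometry $U_g$, is defined. The two-sided ideal property of $\mathcal{A}_g$ and $\mathcal{A}_{g^{-1}}$ is exactly what keeps the products $a\,\tau_g(b)$ and $\tau_{g^{-1}}(a)\,b$ inside the correct ideals, and the multiplicativity of $\tau_{g^{-1}}$ (legitimate because both factors lie in $\mathcal{A}_g$) is what yields the clean factorization $\tau_{g^{-1}}(a\,\tau_g(b)) = \tau_{g^{-1}}(a)\,b$.
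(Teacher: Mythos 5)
Your proof is correct and follows essentially the same route as the paper: evaluate $U_g\pi(a)U_g^\ast$ on the spanning vectors $\pi(b)Vh$, $b \in \mathcal{A}_{g^{-1}}$, of $\mathcal{K}_g$, use $\tau_{g^{-1}}(a\,\tau_g(b)) = \tau_{g^{-1}}(a)\,b$, and conclude by density. The only difference is that you spell out details the paper leaves implicit (the identification $U_g^\ast = U_{g^{-1}}$, the ideal/domain bookkeeping, and the continuity extension), which is fine but not a different argument.
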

\begin{proof}
Let us recall that for each fixed $g\in G$, from Equation \eqref{eqn; K g inverse} we have the Hilbert space $\mathcal{K}_g := \big [ \pi \big (\mathcal{A}_{g^{-1}} \big )V \mathcal{H} \big ] = \big [ \pi \big (\tau_{g^{-1}}(\mathcal{A}_g) \big) V \mathcal{H} \big ].$ Let $b_i \in \mathcal{A}_{g^{-1}}$ and $h_i \in \mathcal{H}$ for $i = 1, 2,\cdots, n$. Then for $a \in \mathcal{A}_g$, by following Equation \eqref{eqn; U g}, we obtain
\begin{align*}
U_g\pi(a)U_g^\ast \left ( \sum\limits^n_i \pi (b_i) Vh_i \right ) &= U_g\pi(a) \sum\limits^n_i \pi\big (\tau_g(b_i) \big ) Vh_i \\
&= U_g \sum\limits^n_i \pi\big (a \tau_g(b_i) \big ) Vh_i \\
&= \sum\limits^n_i \pi\big ( \tau_{g^{-1}} \big (a \tau_g(b_i) \big ) \big ) Vh_i \\
&= \sum\limits^n_i \pi\big ( \tau_{g^{-1}}(a)\big ) \pi \big (b_i \big )  Vh_i = \pi\big ( \tau_{g^{-1}}(a)\big ) \left ( \sum\limits^n_i \pi (b_i) Vh_i \right ).
\end{align*}
This proves the result.
\end{proof}

The following lemma is the Radon--Nikodym type result in the subcollection $\ucpapg$ proved in the setting of Stinespring's dilation of a completely positive map.

\begin{lemma}\label{lem;rnd}
Let $G$ be a group and $\mathcal{A}$ be a unital $C^\ast$-algebra. Let $\tau = \big ( \{ \mathcal{A}_g \}_{g \in G},  \{ \tau_g \}_{g \in G}  \big )$ be a partial action of $G$ on $\mathcal{A}$. Consider a Hilbert space $\mathcal{H}$ and $\phi \in \ucpapg$ with the minimal Stinespring representation for $\phi$ is given by $\big (\pi, V, \mathcal{K} \big )$. Let $U^{G_\tau}_\phi = \big ( \{ \mathcal{K}_{g} \}_{g \in G} ; \{ U_{g} \}_{g \in G} \big )$ be the partial action of the group $G$ on the Hilbert space $\mathcal{K}$ obtained from $\phi$ (see Equation \eqref{eqn; U phi}). Then there exists an affine order isomorphism of the partially ordered convex set of operators 
\begin{equation*} 
\big \{ T \in  \pi(\mathcal{A})^\prime \; \; : \; \; 0 \leq T \leq 1_\mathcal{K}, \; \;  T(\mathcal{K}_g) \subseteq \mathcal{K}_g \textup{ and } TU_g = U_g T\big|_{\mathcal{K}_{g^{-1}}} \; \; \text{for all} \; \; g \in G \big  \}  
\end{equation*}
onto $[0, \phi] \cap \cpapg$ which is given by the map 
\begin{equation*}
T \mapsto \phi_T(\cdot) := V^* \pi (\cdot) T V.
\end{equation*}
\end{lemma}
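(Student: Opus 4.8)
The plan is to deduce the statement from Arveson's Radon--Nikodym theorem (Theorem~\ref{thm;arnd}), which already supplies an affine order isomorphism $T \mapsto \phi_T(\cdot) = V^*\pi(\cdot)TV$ from $\{T \in \pi(\mathcal{A})' : 0 \le T \le 1_\mathcal{K}\}$ onto $[0,\phi]$. Since this map is an affine order isomorphism of the larger sets, it suffices to prove that its restriction carries the displayed subset bijectively onto $[0,\phi] \cap \cpapg$; and for that it is enough to show, for each $T \in \pi(\mathcal{A})'$ with $0 \le T \le 1_\mathcal{K}$, the equivalence: $T$ satisfies $T(\mathcal{K}_g) \subseteq \mathcal{K}_g$ and $TU_g = U_g T|_{\mathcal{K}_{g^{-1}}}$ for all $g$ if and only if $\phi_T \in \cpapg$, i.e. $\phi_T(a) = \phi_T(\tau_g(a))$ for all $g \in G$ and $a \in \mathcal{A}_{g^{-1}}$. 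Once this equivalence is established, the restricted map is automatically an affine order isomorphism, being the restriction of one.

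First I would dispose of the invariance condition $T(\mathcal{K}_g) \subseteq \mathcal{K}_g$, showing it holds automatically for every $T \in \pi(\mathcal{A})'$. Because $\mathcal{A}_{g^{-1}}$ is a two-sided ideal, the computation $\pi(c)\pi(a)Vh = \pi(ca)Vh$ with $ca \in \mathcal{A}_{g^{-1}}$ shows that $\mathcal{K}_g = [\pi(\mathcal{A}_{g^{-1}})V\mathcal{H}]$ is $\pi(\mathcal{A})$-invariant, and more precisely that $\pi(\mathcal{A}_{g^{-1}})\mathcal{K} \subseteq \mathcal{K}_g$. Hence for $a \in \mathcal{A}_{g^{-1}}$ and $h \in \mathcal{H}$ one gets $T\pi(a)Vh = \pi(a)(TVh) \in \mathcal{K}_g$, and by density of $\pi(\mathcal{A}_{g^{-1}})V\mathcal{H}$ in $\mathcal{K}_g$ together with boundedness of $T$, $T(\mathcal{K}_g) \subseteq \mathcal{K}_g$. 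In particular $T(\mathcal{K}_{g^{-1}}) \subseteq \mathcal{K}_{g^{-1}}$, so $U_g T|_{\mathcal{K}_{g^{-1}}}$ is meaningful.

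The heart of the argument is the commutation relation. From the definition of $U_g$ in Equation~\eqref{eqn; U g} one reads off $U_g\pi(b)Vh = \pi(\tau_{g^{-1}}(b))Vh$ for $b \in \mathcal{A}_g$, and, taking adjoints of the unitary $U_g$, $U_g^*\pi(c)Vh' = \pi(\tau_g(c))Vh'$ for $c \in \mathcal{A}_{g^{-1}}$. Using these together with $T \in \pi(\mathcal{A})'$, I would compute the two matrix coefficients on the generating vectors $\pi(b)Vh \in \mathcal{K}_{g^{-1}}$ and $\pi(c)Vh' \in \mathcal{K}_g$, obtaining $\langle TU_g(\pi(b)Vh), \pi(c)Vh'\rangle = \langle \phi_T(w)h, h'\rangle$ and $\langle U_g T(\pi(b)Vh), \pi(c)Vh'\rangle = \langle \phi_T(\tau_g(w))h, h'\rangle$, where $w := c^*\tau_{g^{-1}}(b) \in \mathcal{A}_{g^{-1}}$; here the second identity uses that $\tau_g$ is a $*$-homomorphism and $\tau_g(\tau_{g^{-1}}(b)) = b$ on $\mathcal{A}_g$, so that $\tau_g(w) = \tau_g(c^*)b$. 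Since the vectors $\pi(b)Vh$ (resp. $\pi(c)Vh'$) span dense subspaces of $\mathcal{K}_{g^{-1}}$ (resp. $\mathcal{K}_g$) by minimality of the Stinespring representation, the identity $TU_g = U_g T|_{\mathcal{K}_{g^{-1}}}$ is equivalent to $\phi_T(w) = \phi_T(\tau_g(w))$ for all $b \in \mathcal{A}_g$, $c \in \mathcal{A}_{g^{-1}}$. As $b$ and $c$ vary, $w = c^*\tau_{g^{-1}}(b)$ ranges over all products $y z$ with $y, z \in \mathcal{A}_{g^{-1}}$, whose linear span is dense in the ideal $\mathcal{A}_{g^{-1}}$; since $\phi_T$ and $\tau_g$ are continuous, the equivalence upgrades to $\phi_T(a) = \phi_T(\tau_g(a))$ for all $a \in \mathcal{A}_{g^{-1}}$, which is precisely $\tau$-invariance. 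Running this over all $g \in G$ gives the desired equivalence and completes the proof.

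I expect the main obstacle to be the forward implication, namely promoting the \emph{compressed} invariance identity $V^*\pi(a)TV = V^*\pi(\tau_g(a))TV$—which only sees $T$ through the sandwich $V^*(\cdot)V$—to the genuine operator identity $TU_g = U_g T|_{\mathcal{K}_{g^{-1}}}$ on all of $\mathcal{K}$. The device that resolves this is to test the two operators against the dense generating families of $\mathcal{K}_{g^{-1}}$ and $\mathcal{K}_g$ and to recognize both resulting coefficients as values of $\phi_T$ at the \emph{same} ideal element $w$ and at its translate $\tau_g(w)$; the remaining subtlety is the density of $\overline{\operatorname{span}}(\mathcal{A}_{g^{-1}}\mathcal{A}_{g^{-1}})$ in $\mathcal{A}_{g^{-1}}$, which lets pointwise invariance on products extend to all of $\mathcal{A}_{g^{-1}}$.
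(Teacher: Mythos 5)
Your proof is correct and follows essentially the same route as the paper's: Arveson's Radon--Nikodym theorem (Theorem~\ref{thm;arnd}) disposes of the $[0,\phi]$ part, and the commutation condition $TU_g = U_g T\big|_{\mathcal{K}_{g^{-1}}}$ is matched with $\tau$-invariance of $\phi_T$ by computing matrix coefficients against the generating vectors $\pi(\cdot)V\mathcal{H}$ of $\mathcal{K}_{g^{-1}}$ and $\mathcal{K}_g$, exactly as in the paper. If anything, your write-up is slightly more complete than the paper's, which leaves implicit both the fact that $T(\mathcal{K}_g) \subseteq \mathcal{K}_g$ holds automatically for every $T \in \pi(\mathcal{A})^\prime$ (since $\mathcal{A}_{g^{-1}}$ is an ideal) and the density of $\operatorname{span}\big(\mathcal{A}_{g^{-1}}\mathcal{A}_{g^{-1}}\big)$ in $\mathcal{A}_{g^{-1}}$ needed to upgrade invariance on products $b^\ast a$ to invariance on all of $\mathcal{A}_{g^{-1}}$.
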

\begin{proof}
Let $T \in \pi(\mathcal{A})^\prime$ be such that $0 \leq T \leq 1_\mathcal{K}$, $T(\mathcal{K}_g) \subseteq \mathcal{K}_g$ and  $TU_g = U_g T\big|_{\mathcal{K}_{g^{-1}}}$  for all $g \in G$. Then we show that the map $\phi_T : \mathcal{A} \rightarrow \mathcal{B}(\mathcal{H})$ defined by $\phi_T(a) := V^* \pi (a) T V$ for all $a \in \mathcal{A}$ is in $[0, \phi] \cap \cpapg$. Since $T \in \pi(\mathcal{A})^\prime$ and $0 \leq T \leq 1_\mathcal{K}$, by following Theorem \ref{thm;arnd}, we know that $\phi_T \in [0, \phi]$. Now we prove that $\phi_T \in \cpapg$. Let $g \in G$,  $a, b \in \mathcal{A}_{g^{-1}}$ and $h_1, h_2 \in \mathcal{H}$. Then we have
\begin{align*}
\big \langle \phi_T \big (\tau_g(b^\ast a) \big )h_1, h_2 \big \rangle &= \big \langle \phi_T \big (\tau_g(b)^\ast \tau_g(a) \big )h_1, h_2 \big \rangle  \\
&= \big \langle  T \pi \big (\tau_g(b)^\ast \tau_g(a) \big )Vh_1, Vh_2 \big \rangle \\
&= \big \langle T \pi \big (\tau_g(a) \big)Vh_1,   \pi \big (\tau_g(b) \big)Vh_2 \big \rangle \\
&= \big \langle T U_g^\ast \big (\pi(a)Vh_1 \big ),  U_g^\ast \big (\pi(b)Vh_2 \big ) \big \rangle \\
&= \big \langle U_g T U_g^\ast \big (\pi(a)Vh_1 \big ),  \pi(b)Vh_2 \big \rangle \\
&= \big \langle \pi(b)^\ast U_g T U_g^\ast \big (\pi(a)Vh_1 \big ), Vh_2  \big \rangle \\
&= \big \langle \pi(b)^\ast U_g U_g^\ast T \big ( \pi(a) Vh_1 \big ), Vh_2  \big \rangle \\
&= \big \langle \pi(b)^\ast T \pi(a) Vh_1, Vh_2  \big \rangle \\
&= \big \langle V^\ast T \pi(b^\ast a) Vh_1, h_2  \big \rangle \\
&= \big \langle \phi_T(b^\ast a) h_1, h_2\big \rangle.
\end{align*}
Since $g \in G$, $a, b \in \mathcal{A}_{g^{-1}}$ and $h_1, h_2 \in \mathcal{H}$ were arbitrarily chosen, we obtain  $\phi_T \in \cpapg$.

Conversely, assume that $\phi_T \in [0, \phi] \cap \cpapg$. Since $\phi_T \in [0, \phi]$ by following Theorem \ref{thm;arnd}, we get $T \in \pi(\mathcal{A})^\prime$ with the property that $0 \leq T \leq 1_\mathcal{K}$. We also have  $\phi_T \in \cpapg$.  Thus for $g \in G$,  $a, b \in \mathcal{A}_{g^{-1}}$ and $h_1, h_2 \in \mathcal{H}$, we have
\begin{align*}
\big \langle U_g^\ast T U_g \pi \big (\tau_g(a) \big )Vh_1, \pi  \big(\tau_g(b)  \big )Vh_2 \big \rangle
&= \big \langle T U_g \pi  \big (\tau_g(a)  \big )Vh_1, U_g \pi  \big (\tau_g(b)  \big)Vh_2  \big \rangle \\
&= \big \langle T  \pi(a)Vh_1,  \pi(b)Vh_2  \big \rangle \\
&= \big \langle V^\ast\pi(b)^\ast T \pi(a) Vh_1, h_2  \big \rangle \\
&= \big \langle V^\ast T \pi(b^\ast a) Vh_1, h_2  \big \rangle \\
&= \big \langle \phi_T(b^\ast a) h_1, h_2  \big \rangle \\
&= \big \langle \phi_T \big (  \tau_g(b^\ast a) \big ) h_1, h_2  \big \rangle \\
&= \big \langle T \pi \big  ( \tau_g(b)^\ast \tau_g(a) \big ) Vh_1, Vh_2  \big \rangle \\
&= \big \langle T \pi \big ( \tau_g(a) \big ) Vh_1, \pi \big  (\tau_g(b) \big ) Vh_2 \big \rangle. 
\end{align*}
From Equation \eqref{eqn; K g inverse} we know that $\mathcal{K}_{g^{-1}} := \big [ \pi \big (\mathcal{A}_g \big )V \mathcal{H} \big ] = \big [ \pi \big (\tau_g(\mathcal{A}_{g^{-1}}) \big) V \mathcal{H} \big ]$. Since  $a, b \in \mathcal{A}_{g^{-1}}$ and $h_1, h_2 \in \mathcal{H}$ were arbitrarily chosen, we get $U_g^\ast T U_g = T\big|_{\mathcal{K}_{g^{-1}}}$ and thus $T U_g = U_g T\big|_{\mathcal{K}_{g^{-1}}}$. As $g \in G$ was arbitrary, we get the result.
\end{proof}

Now, in the remaining part of the section, we see similar results in the setting of Paschke's dilation of a completely positive map. For this, first, we set some notations to state these results. Let $G$ be a group and $\mathcal{A}$ be a unital $C^\ast$-algebra. Let $\tau = \big ( \{ \mathcal{A}_g \}_{g \in G},  \{ \tau_g \}_{g \in G}  \big )$ be a partial action of $G$ on $\mathcal{A}$. Consider a Hilbert space $\mathcal{H}$ and $\phi \in \ucpapg$. Then by applying Theorem \ref{thm;pd}, we get the triple $\big ( \mathcal{X}, \sigma, e \big )$, where $\mathcal{X}$ is a right Hilbert-$\mathcal{B}(\mathcal{H})$ module, $e \in \mathcal{X}$, and $\sigma : \mathcal{A} \rightarrow \mathcal{P} \big ( \mathcal{X} \big)$ a *-representation such that 
\begin{equation*}
\phi(a) = \big \langle \sigma(a)e, e  \big \rangle \; \; \; \text{for all} \; \; a \in \mathcal{A}.
\end{equation*}
We recall that the set $\big \{ \sigma(a)(eS) \;  : \;  a \in \mathcal{A}, S \in \mathcal{B}(\mathcal{H}) \big \}$ spans a dense subspace of $\mathcal{X}$ (see Theorem~\ref{thm;pd}).  Since $\phi \in \ucpapg$, for $g \in G$ and $a \in \mathcal{A}_{g^{-1}}$, we have 
\begin{equation*}
\big \langle \sigma(a)e, e \big \rangle = \phi(a)  = \phi \big(\tau_g(a) \big) = \big \langle \sigma(\tau_g(a))e, e \big \rangle.
\end{equation*}
Now for each fixed $g \in G$, define a Hilbert-$\mathcal{B}(\mathcal{H})$ submodule of the Hilbert-$\mathcal{B}(\mathcal{H})$ module $\mathcal{X}$ as
\begin{equation} \label{eqn; X g inverse}
\mathcal{X}_{g^{-1}} := \big [ \sigma \big (\mathcal{A}_g \big ) \big ( e \mathcal{B}(\mathcal{H})  \big ) \big ] = \big [ \sigma \big (\tau_g(\mathcal{A}_{g^{-1}}) \big) \big ( e \mathcal{B}(\mathcal{H})  \big ) \big ].
\end{equation}
From Theorem \ref{thm;pd}, it is clear that $\mathcal{X}_{g^{-1}} \subseteq \mathcal{X}$ for all $g \in G$.
Next, define a map $W_g : \mathcal{X}_{g^{-1}} \rightarrow \mathcal{X}_{g}$ as 
\begin{equation} \label{eqn; W g}
W_g \left ( \sum\limits^n_i \sigma (a_i) eS_i \right ) = \sum\limits^n_i \sigma\big (\tau_{g^{-1}}(a_i) \big ) eS_i,  \; \; \; \; \text{where} \; \; a_i \in \mathcal{A}_g  \; \; \text{and} \; \; S_i \in \mathcal{B}(\mathcal{H}). 
\end{equation}
Then one can see that $W_g$ is a Hilbert module isomorphism. Since $g \in G$ was arbitrarily chosen, we get the Hilbert module isomorphism $W_g : \mathcal{X}_{g^{-1}} \rightarrow \mathcal{X}_{g}$ for each $g \in G$. Thus for a fixed $\phi \in \ucpapg$, we get a partial action of the group $G$ on the Hilbert-$\mathcal{B}(\mathcal{H})$ module $\mathcal{X}$, which is given by the pair 
\begin{equation} \label{eqn; W phi}
W^{G_\tau}_\phi := \big ( \{ \mathcal{X}_{g} \}_{g \in G} ; \{ W_{g} \}_{g \in G} \big ).
\end{equation}
A result motivated by Theorem \ref{thm;umpd} is given below:

\begin{proposition}
Let $G$ be a group and $\mathcal{A}$ be a unital $C^\ast$-algebra. Let $\tau = \big ( \{ \mathcal{A}_g \}_{g \in G},  \{ \tau_g \}_{g \in G}  \big )$ be a partial action of $G$ on $\mathcal{A}$. Consider a Hilbert space $\mathcal{H}$ and $\phi \in \ucpapg$ with the Paschke's dilation for $\phi$ is given by $\big ( \mathcal{X}, \sigma, e \big )$. Let $W^{G_\tau}_\phi = \big ( \{ \mathcal{X}_{g} \}_{g \in G} ; \{ W_{g} \}_{g \in G} \big )$ be the partial action of the group $G$ on the Hilbert-$\mathcal{B}(\mathcal{H})$ module $\mathcal{X}$ obtained from $\phi$ (as described in Equation \eqref{eqn; W phi}). Then for all $g \in G$ and $a \in \mathcal{A}_g$,  we have 
\begin{equation*}
W_g\sigma(a)W_g^\ast =\sigma\big (\tau_{g^{-1}}(a) \big ) \big|_{\mathcal{X}_g}.
\end{equation*}
\end{proposition}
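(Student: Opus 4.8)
The plan is to mimic, in the Hilbert-module setting, the computation used to prove the preceding Stinespring proposition. Fixing $g \in G$, I would first record the description of the target submodule obtained by replacing $g$ with $g^{-1}$ in Equation~\eqref{eqn; X g inverse}, namely
\begin{equation*}
\mathcal{X}_g = \big [ \sigma \big (\mathcal{A}_{g^{-1}} \big ) \big ( e \mathcal{B}(\mathcal{H}) \big ) \big ] = \big [ \sigma \big (\tau_{g^{-1}}(\mathcal{A}_g) \big ) \big ( e \mathcal{B}(\mathcal{H}) \big ) \big ],
\end{equation*}
so that elements of the form $\sum_i \sigma(b_i) e S_i$ with $b_i \in \mathcal{A}_{g^{-1}}$ and $S_i \in \mathcal{B}(\mathcal{H})$ span a dense subspace of $\mathcal{X}_g$. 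Since it suffices to check the claimed operator identity on this spanning set and both sides are bounded (adjointable) operators, establishing equality there yields equality on all of $\mathcal{X}_g$.

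Next I would verify the action of $W_g^\ast$ on this spanning set. Because $W_g : \mathcal{X}_{g^{-1}} \to \mathcal{X}_g$ is a Hilbert module isomorphism, its adjoint is its inverse; comparing with Equation~\eqref{eqn; W g}, $W_g^\ast$ sends $\sigma(b) e S$ with $b \in \mathcal{A}_{g^{-1}}$ to $\sigma(\tau_g(b)) e S$. With this in hand, for $a \in \mathcal{A}_g$ the computation runs as follows: applying $W_g^\ast$ produces $\sum_i \sigma(\tau_g(b_i)) e S_i$; applying $\sigma(a)$ and using multiplicativity of $\sigma$ gives $\sum_i \sigma(a \tau_g(b_i)) e S_i$, where each $a \tau_g(b_i) \in \mathcal{A}_g$ because $\mathcal{A}_g$ is a two-sided ideal; applying $W_g$ then yields $\sum_i \sigma(\tau_{g^{-1}}(a \tau_g(b_i))) e S_i$. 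Finally, using that $\tau_{g^{-1}}$ is a $\ast$-homomorphism with $\tau_{g^{-1}} \circ \tau_g = \mathrm{id}$ on $\mathcal{A}_{g^{-1}}$, I get $\tau_{g^{-1}}(a \tau_g(b_i)) = \tau_{g^{-1}}(a)\, b_i$, so the expression collapses to $\sigma(\tau_{g^{-1}}(a)) \sum_i \sigma(b_i) e S_i$, which is exactly $\sigma(\tau_{g^{-1}}(a))\big|_{\mathcal{X}_g}$ applied to the original vector.

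The computation itself is formal; the only points requiring care---and the place I would expect a referee to look---are the well-definedness checks that keep every intermediate vector inside the correct submodule. Concretely, one must confirm that $W_g^\ast$ really is the inverse map given above (which is where the unitarity and adjointability of the module isomorphism $W_g$ are used), that $a \tau_g(b_i)$ lands in $\mathcal{A}_g$ so that $W_g$ is legitimately applicable, and that the multiplicative identity $\tau_{g^{-1}}(a\tau_g(b_i)) = \tau_{g^{-1}}(a)b_i$ holds---each factor and product lying in the relevant ideal being guaranteed by the ideal structure and the defining axioms of the partial action in Definition~\ref{def-partial-action}. Once these are in place, the density argument from the first paragraph finishes the proof.
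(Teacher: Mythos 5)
Your proof is correct and follows essentially the same route as the paper's: check the identity $W_g\sigma(a)W_g^\ast = \sigma(\tau_{g^{-1}}(a))\big|_{\mathcal{X}_g}$ on the dense spanning set $\sum_i \sigma(b_i) e S_i$ with $b_i \in \mathcal{A}_{g^{-1}}$, $S_i \in \mathcal{B}(\mathcal{H})$, using $W_g^\ast\big(\sigma(b)eS\big) = \sigma(\tau_g(b))eS$, the ideal property to keep $a\tau_g(b_i)$ in $\mathcal{A}_g$, and the identity $\tau_{g^{-1}}(a\tau_g(b_i)) = \tau_{g^{-1}}(a)b_i$. Your additional well-definedness remarks (adjoint equals inverse, density) are points the paper leaves implicit, but the argument is the same.
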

\begin{proof}
Let us recall that for each fixed $g\in G$, from Equation \eqref{eqn; X g inverse} we have the Hilbert-$\mathcal{B}(\mathcal{H})$ submodule given by $\mathcal{X}_g := \big [ \sigma \big (\mathcal{A}_{g^{-1}} \big ) \big ( e \mathcal{B}(\mathcal{H}) \big ) \big ] = \big [ \sigma \big (\tau_{g^{-1}}(\mathcal{A}_g) \big) ( e \mathcal{B}(\mathcal{H}) \big ) \big ].$ Let $b_i \in \mathcal{A}_{g^{-1}}$ and $S_i \in \mathcal{B}(\mathcal{H})$ for $i = 1, 2,\cdots, n$. Then for $a \in \mathcal{A}_g$, by following Equation \eqref{eqn; W g}, we obtain
\begin{align*}
W_g\sigma(a)W_g^\ast \left ( \sum\limits^n_i \sigma (b_i) eS_i \right ) &= W_g\sigma(a) \sum\limits^n_i \sigma\big (\tau_g(b_i) \big ) eS_i \\
&= W_g \sum\limits^n_i \sigma\big (a \tau_g(b_i) \big ) eS_i \\
&= \sum\limits^n_i \sigma\big ( \tau_{g^{-1}} \big (a \tau_g(b_i) \big ) \big ) eS_i \\
&= \sum\limits^n_i \sigma\big ( \tau_{g^{-1}}(a)\big ) \sigma \big (b_i \big )  eS_i = \sigma\big ( \tau_{g^{-1}}(a)\big ) \left ( \sum\limits^n_i \sigma (b_i) eS_i \right ).
\end{align*}
This proves the result.
\end{proof}

For each fixed $g \in G$, we have the Hilbert-$\mathcal{B}(\mathcal{H})$ submodule $\mathcal{X}_g$ of the Hilbert-$\mathcal{B}(\mathcal{H})$ module $\mathcal{X}$ as defined in Equation \eqref{eqn; X g inverse}. Now consider the set 
\begin{equation*}
\mathcal{X}_g^\prime := \big \{ f : \mathcal{X}_g \rightarrow \mathcal{B}(\mathcal{H}) \; \; : \; \; f \; \; \text{is a bounded $\mathcal{B}(\mathcal{H})$-module map} \big \}.
\end{equation*} 
For a fixed $x \in \mathcal{X}_g$, we define a map $\hat{x} : \mathcal{X}_g \rightarrow \mathcal{B}(\mathcal{H})$ as $\hat{x}(y) := \langle y, x \rangle$ for $y \in \mathcal{X}_g$. Thus, with this identification, we have $\mathcal{X}_g \subseteq \mathcal{X}_g^\prime$ for all $g \in G$. Also, from Equation \eqref{eqn; X g inverse} and Equation \eqref{eq; X prime}, we know that $\mathcal{X}_g \subseteq \mathcal{X} \subseteq \mathcal{X}^\prime$ for all $g \in G$. So, in conclusion, we obtain
\begin{equation} \label{eq; containment of Xg and Xg prime}
\mathcal{X}_g \subseteq \mathcal{X} \subseteq \mathcal{X}^\prime  \; \; \; \text{and}  \; \; \;  \mathcal{X}_g \subseteq \mathcal{X}^\prime_g  \; \; \; \text{for all} \; \; g \in G.
\end{equation}
Since $\mathcal{B}(\mathcal{H})$ is a von Neumann algebra by following Theorem \ref{thm;xprime}, we see that $\mathcal{X}^\prime_g$ is a self dual Hilbert-$\mathcal{B}(\mathcal{H})$ module (with the $\mathcal{B}(\mathcal{H})$-valued inner product on $\mathcal{X}_g$ extends to a $\mathcal{B}(\mathcal{H})$-valued inner product on $\mathcal{X}_g^\prime$). Also, it satisfies $\langle \hat{x}, f \rangle = f(x)$ for $x \in \mathcal{X}_g$, $f \in \mathcal{X}_g^\prime$. Further, by using Proposition \ref{prop;pext}, the map $W_g : \mathcal{X}_{g^{-1}} \rightarrow \mathcal{X}_{g}$ (defined in Equation \eqref{eqn; W g}) can be uniquely extended to the map $\widetilde{W_g} : \mathcal{X}^\prime_{g^{-1}} \rightarrow \mathcal{X}^\prime_{g}$ for all $g \in G$. For $g \in G$ and $a \in \mathcal{A}_g$, by following Equation \eqref{eqn; X g inverse}  we get $\sigma(a)e \in \mathcal{X}_{g^{-1}}$ and then  definition of $\widetilde{\sigma}$ and finally Equation \eqref{eq; containment of Xg and Xg prime}  imply that  \begin{equation*}
\widetilde{\sigma}(a) \hat{e} = \widetilde{\sigma(a)} \hat{e} = \widehat{\sigma(a)e}  \in \mathcal{X}^\prime_{g^{-1}}.
\end{equation*}
For $g \in G$,  define 
\begin{equation}  \label{eqn; Y g inverse}
\mathcal{Y}_{g^{-1}} := \big \{ \widetilde{\sigma}(a) \hat{e} \; : \; a \in \mathcal{A}_g  \big \}  \subseteq \mathcal{X}^\prime_{g^{-1}}.
\end{equation}
Then we get $\widetilde{W_{g}} \big ( \widetilde{\sigma}(a) \hat{e} \big ) =  \widetilde{\sigma} \big( \tau_{g^{-1}}(a) \big ) \hat{e} $ for all $\widetilde{\sigma}(a) \hat{e} \in \mathcal{Y}_{g^{-1}}$. Thus $\widetilde{W_{g}} \big ( \mathcal{Y}_{g^{-1}} \big ) = \mathcal{Y}_g$. For a fixed $\phi \in \ucpapg$, we consider the following pair 
\begin{equation} \label{eqn; W phi tilde}
\widetilde{W^{G_\tau}_\phi} := \big ( \{ \mathcal{X}^\prime_{g} \}_{g \in G} ; \{ \widetilde{W_{g}} \}_{g \in G} \big ).
\end{equation}
The pair given by \(\widetilde{W^{G_\tau}_\phi}\) is a partial action of \(G\) on the Hilbert \(\mathcal{B}(\mathcal{H})\)-module \(\mathcal{X}^{\prime}\).
%\textcolor{red}{Whether this pair is a partial action of $G$ on some Hilbert-$\mathcal{B}(\mathcal{H})$ module? If yes, then on which module? } 

The following lemma is the Radon--Nikodym type result in the subcollection $\ucpapg$ proved in the setting of Paschke's dilation of a completely positive map.

\begin{lemma}\label{lem;prnd}
Let $G$ be a group and $\mathcal{A}$ be a unital $C^\ast$-algebra. Let $\tau = \big ( \{ \mathcal{A}_g \}_{g \in G},  \{ \tau_g \}_{g \in G}  \big )$ be a partial action of $G$ on $\mathcal{A}$. Consider a Hilbert space $\mathcal{H}$ and $\phi \in \ucpapg$ with the Paschke's dilation for $\phi$ is given by $\big ( \mathcal{X}, \sigma, e \big )$. Let $\widetilde{W^{G_\tau}_\phi} := \big ( \{ \mathcal{X}^\prime_{g} \}_{g \in G} ; \{ \widetilde{W_{g}} \}_{g \in G} \big )$ be the pair (as described in Equation \eqref{eqn; W phi tilde}).  Then there exists an affine order isomorphism of the partially ordered convex set of operators 
\begin{equation*}
\big \{ T \in  \widetilde{\sigma}(\mathcal{A})^\prime \; \; : \; \; 0 \leq T \leq 1_{\mathcal{X}^\prime}, \; \;  T(\mathcal{Y}_g) \subseteq \mathcal{Y}_g \textup{ and } T\widetilde{W}_g \big |_{\mathcal{Y}_{g^{-1}}} = \widetilde{W}_g T \big |_{\mathcal{Y}_{g^{-1}}}  \; \; \text{for all} \; \; g \in G \big  \}  
\end{equation*}
onto $[0, \phi] \cap \cpapg$ which is given by the map 
\begin{equation*}
T \mapsto \phi_T(\cdot) := \big \langle T \widetilde{\sigma}(\cdot) \hat{e}, \hat{e} \big  \rangle.
\end{equation*}
\end{lemma}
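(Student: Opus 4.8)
The plan is to build directly on the Paschke Radon--Nikodym theorem (Theorem~\ref{thm;prnd}), which already furnishes an affine order isomorphism $T \mapsto \phi_T$ between $\{T \in \widetilde{\sigma}(\mathcal{A})' : 0 \le T \le 1_{\mathcal{X}'}\}$ and $[0,\phi]$. Since the map in the present lemma is the restriction of this same correspondence to a subset of operators, affineness, injectivity and the order-isomorphism property are inherited for free; what must be proved is that the two additional conditions on $T$ cut out \emph{exactly} the preimage of $[0,\phi]\cap\cpapg$. Thus the whole argument reduces to the equivalence: $T$ satisfies $T(\mathcal{Y}_g)\subseteq\mathcal{Y}_g$ and $T\widetilde{W}_g|_{\mathcal{Y}_{g^{-1}}} = \widetilde{W}_g T|_{\mathcal{Y}_{g^{-1}}}$ for all $g$ if and only if $\phi_T\in\cpapg$. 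This is the module-theoretic analogue of Lemma~\ref{lem;rnd}, and I would organize the proof along the same two-directional lines, replacing the Hilbert-space vectors $\pi(a)Vh$ by the module vectors $\widetilde{\sigma}(a)\hat e$ and the scalar inner product by the $\mathcal{B}(\mathcal{H})$-valued one (so that no auxiliary vectors $h_1,h_2\in\mathcal{H}$ are needed).

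For the forward direction, I would fix $g\in G$ and $a,b\in\mathcal{A}_{g^{-1}}$ and compute $\phi_T(\tau_g(b^\ast a))$. Writing $\widetilde{\sigma}(\tau_g(b^\ast a)) = \widetilde{\sigma}(\tau_g(b))^\ast\widetilde{\sigma}(\tau_g(a))$ (using that $\widetilde{\sigma}$ is a $\ast$-representation and $\tau_g$ a $\ast$-isomorphism on the ideal) turns $\phi_T(\tau_g(b^\ast a))$ into $\langle T\widetilde{\sigma}(\tau_g(a))\hat e,\ \widetilde{\sigma}(\tau_g(b))\hat e\rangle$. The three key inputs then are: (i) the identity $\widetilde{\sigma}(\tau_g(a))\hat e = \widetilde{W}_g^\ast(\widetilde{\sigma}(a)\hat e)$, which is exactly the content of $\widetilde{W}_g(\mathcal{Y}_{g^{-1}}) = \mathcal{Y}_g$ from Equation~\eqref{eqn; Y g inverse} read backwards (with $\widetilde{W}_g^\ast = \widetilde{W}_{g^{-1}}$); (ii) the commutation $T\widetilde{W}_g^\ast = \widetilde{W}_g^\ast T$ on $\mathcal{Y}_g$, which is the hypothesis applied with $g^{-1}$ in place of $g$; and (iii) the fact that $\widetilde{W}_g$ preserves the $\mathcal{B}(\mathcal{H})$-valued inner product, being a Hilbert-module isomorphism. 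Chaining these, together with $T\in\widetilde{\sigma}(\mathcal{A})'$, collapses the expression to $\langle T\widetilde{\sigma}(b^\ast a)\hat e,\hat e\rangle = \phi_T(b^\ast a)$. Since products $b^\ast a$ densely span the $C^\ast$-algebra $\mathcal{A}_{g^{-1}}$ and both $\phi_T$ and $\phi_T\circ\tau_g$ are bounded, this equality propagates to all of $\mathcal{A}_{g^{-1}}$, giving $\phi_T\in\cpapg$.

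For the converse I would assume $\phi_T\in[0,\phi]\cap\cpapg$ (so $T\in\widetilde{\sigma}(\mathcal{A})'$ with $0\le T\le 1_{\mathcal{X}'}$ is already granted by Theorem~\ref{thm;prnd}) and run the same chain of equalities in reverse. Starting from $\langle \widetilde{W}_g^\ast T\widetilde{W}_g\,\widetilde{\sigma}(\tau_g(a))\hat e,\ \widetilde{\sigma}(\tau_g(b))\hat e\rangle$ and invoking $\phi_T(\tau_g(b^\ast a)) = \phi_T(b^\ast a)$, I would arrive at $\langle \widetilde{W}_g^\ast T\widetilde{W}_g\,\xi,\eta\rangle = \langle T\xi,\eta\rangle$ for all $\xi,\eta$ in the generating set $\mathcal{Y}_{g^{-1}}$; since $\mathcal{Y}_{g^{-1}}$ generates $\mathcal{X}'_{g^{-1}}$ as a Hilbert $\mathcal{B}(\mathcal{H})$-module, this yields $\widetilde{W}_g^\ast T\widetilde{W}_g = T|_{\mathcal{Y}_{g^{-1}}}$ and hence $T\widetilde{W}_g = \widetilde{W}_g T$ on $\mathcal{Y}_{g^{-1}}$. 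The remaining invariance $T(\mathcal{Y}_g)\subseteq\mathcal{Y}_g$ (needed already to make $\widetilde{W}_g^\ast T\widetilde{W}_g$ meaningful) I would obtain from $T\in\widetilde{\sigma}(\mathcal{A})'$ together with the ideal property $\mathcal{A}_{g^{-1}}\mathcal{A} = \mathcal{A}_{g^{-1}}$, exactly as the inclusion $T(\mathcal{K}_g)\subseteq\mathcal{K}_g$ is automatic in the Stinespring picture of Lemma~\ref{lem;rnd}.

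I expect the main obstacle to be the bookkeeping in the self-dual completion $\mathcal{X}'$: one must check that $T$, the extensions $\widetilde{W}_g$, and the operator-valued inner product interact correctly on the subspaces $\mathcal{Y}_g\subseteq\mathcal{X}'_g$, that the adjoint/inverse identification $\widetilde{W}_g^\ast = \widetilde{W}_{g^{-1}}$ is used with the correct domains, and---most delicately---that relations verified only on the generating set $\mathcal{Y}_{g^{-1}}$ genuinely extend to the closed submodule it generates. Establishing and justifying the inclusion $T(\mathcal{Y}_g)\subseteq\mathcal{Y}_g$ (or its closed-submodule version) is the one step with no purely formal shortcut, since unlike the scalar Hilbert-space setting the module vectors do not split as cleanly; this is where I would spend the most care.
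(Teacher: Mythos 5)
Your proposal follows essentially the same route as the paper's proof: both directions reduce, via Theorem~\ref{thm;prnd}, to matching the two extra conditions on $T$ with the $\tau$-invariance of $\phi_T$, and both are proved by exactly the chain of $\mathcal{B}(\mathcal{H})$-valued inner-product identities you describe (the paper works with $a,b\in\mathcal{A}_g$ and $\tau_{g^{-1}}$, which is your computation with $g$ replaced by $g^{-1}$). If anything, you are more explicit than the paper, which passes silently from products $b^\ast a$ to all of $\mathcal{A}_{g^{-1}}$ and never addresses the invariance condition $T(\mathcal{Y}_g)\subseteq\mathcal{Y}_g$ in the converse direction at all---precisely the step you flag as the delicate one.
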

\begin{proof}
Let $T \in  \widetilde{\sigma}(\mathcal{A})^\prime$ be such that $0 \leq T \leq 1_{\mathcal{X}^\prime}$, $T(\mathcal{Y}_g) \subseteq \mathcal{Y}_g$ and  $T\widetilde{W}_g \big |_{\mathcal{Y}_{g^{-1}}} = \widetilde{W}_g T \big |_{\mathcal{Y}_{g^{-1}}}$  for all $g \in G$. Then we show that the map $\phi_T : \mathcal{A} \rightarrow \mathcal{B}(\mathcal{H})$ defined by $\phi_T(a) := \big \langle T \widetilde{\sigma}(a) \hat{e}, \hat{e} \big  \rangle$ for all $a \in \mathcal{A}$ is in $[0, \phi] \cap \cpapg$. Since $T \in  \widetilde{\sigma}(\mathcal{A})^\prime$ and $0 \leq T \leq 1_{\mathcal{X}^\prime}$, by following Theorem \ref{thm;prnd}, we know that $\phi_T \in [0, \phi]$. Now we prove that $\phi_T \in \cpapg$. Let $g \in G$ and $a, b \in \mathcal{A}_g$. Then we have
\begin{align*}
\phi_T \big (\tau_{g^{-1}}(b^\ast a) \big ) &=  \big \langle T \widetilde{\sigma}\big (\tau_{g^{-1}}(b^\ast a) \big ) \hat{e}, \hat{e} \big  \rangle \\
&= \big \langle T \widetilde{\sigma} \big (\tau_{g^{-1}}(a) \big )  \hat{e}, \widetilde{\sigma} \big (\tau_{g^{-1}}(b) \big ) \hat{e} \big  \rangle \\
&= \big \langle T \widetilde{W_{g}} \big ( \widetilde{\sigma}(a) \hat{e} \big ), \widetilde{W_{g}} \big ( \widetilde{\sigma}(b) \hat{e} \big ) \big  \rangle \\
&= \big \langle \widetilde{W_{g}}^\ast T \widetilde{W_{g}} \big ( \widetilde{\sigma}(a) \hat{e} \big ),  \widetilde{\sigma}(b) \hat{e} \big   \rangle\\ 
&= \big \langle \widetilde{W_{g}}^\ast \widetilde{W_{g}} T  \big ( \widetilde{\sigma}(a) \hat{e} \big ),  \widetilde{\sigma}(b) \hat{e} \big   \rangle\\
&= \big \langle  T  \big ( \widetilde{\sigma}(a) \hat{e} \big ),  \widetilde{\sigma}(b) \hat{e} \big   \rangle \\
&= \big \langle  T  \big ( \widetilde{\sigma}(b^\ast a) \hat{e} \big ),   \hat{e} \big   \rangle \\
&= \phi_T \big (b^\ast a \big).
\end{align*}
Since $g \in G$ and $a, b \in \mathcal{A}_g$ were arbitrarily chosen, we get $\phi_T \in \cpapg$.

Conversely, assume that $\phi_T \in [0, \phi] \cap \cpapg$. Since $\phi_T \in [0, \phi]$ by following Theorem \ref{thm;prnd}, we get $T \in  \widetilde{\sigma}(\mathcal{A})^\prime$ with the property that $0 \leq T \leq 1_{\mathcal{X}^\prime}$. We also know that  $\phi_T \in \cpapg$.  Thus for $g \in G$ and  $a, b \in \mathcal{A}_g$, we have
\begin{align*}
\big \langle \widetilde{W_{g}}^\ast T \widetilde{W_{g}} \big (\widetilde{\sigma}(a) \hat{e} \big ), \widetilde{\sigma}(b) \hat{e} \big \rangle &= \big \langle  T \widetilde{W_{g}} \big ( \widetilde{\sigma}(a) \hat{e} \big ), \widetilde{W_{g}} \big ( \widetilde{\sigma}(b) \hat{e} \big ) \big \rangle \\
&= \big \langle  T \widetilde{\sigma} \big( \tau_{g^{-1}}(a) \hat{e} \big ), \widetilde{\sigma} \big( \tau_{g^{-1}}(b) \hat{e} \big ) \big \rangle \\
&= \big \langle \widetilde{\sigma} \big( \tau_{g^{-1}}(b) \big )^\ast T \widetilde{\sigma} \big( \tau_{g^{-1}}(a) \hat{e} \big ),  \hat{e} \big \rangle \\
&= \big \langle T \widetilde{\sigma} \big( \tau_{g^{-1}}(b^\ast a) \hat{e} \big ),  \hat{e} \big \rangle \\
&=  \phi_T \big (\tau_{g^{-1}}(b^\ast a) \big ) \\
&= \phi_T \big (b^\ast a \big) \\
&= \big \langle T \widetilde{\sigma} \big (b^\ast a \big ) \hat{e},  \hat{e} \big \rangle \\
&= \big \langle T  \big (\widetilde{\sigma}(a) \hat{e} \big ), \widetilde{\sigma}(b) \hat{e} \big \rangle. 
\end{align*}
For all $g \in G$, we have $\mathcal{Y}_{g^{-1}} := \big \{ \widetilde{\sigma}(a) \hat{e} \; : \; a \in \mathcal{A}_g  \big \}$. Since  $a, b \in \mathcal{A}_g$ were arbitrarily chosen, we get $\widetilde{W_{g}}^\ast T \widetilde{W_{g}} \big|_{\mathcal{Y}_{g^{-1}}} = T\big|_{\mathcal{Y}_{g^{-1}}}$ and thus $T\widetilde{W}_g \big |_{\mathcal{Y}_{g^{-1}}} = \widetilde{W}_g T \big |_{\mathcal{Y}_{g^{-1}}}$. As $g \in G$ was arbitrary, we get the result.
\end{proof}

%%%%%%%%%%%%%%%%%%%%%%%%%%%%%%%%%%%%%%%%%%%%%%%%%%%%%%%%%%%%%%%%%%%%%%%%%%%%%%%%%%%%%%%%%

\section{Main Result} \label{sec; main result}

Let $G$ be a group and $\mathcal{A}$ be a unital $C^\ast$-algebra. Suppose $\tau = \big ( \{ \mathcal{A}_g \}_{g \in G},  \{ \tau_g \}_{g \in G}  \big )$ is a  partial action of $G$ on the $C^\ast$-algebra $\mathcal{A}$. For a Hilbert space $\mathcal{H}$, recall the definitions of \(\cba\), \(\cpa\), \( \ucpa\), \(\cpapg\) and \(\ucpapg\) from Section~\ref{sec; RND}. 
%\begin{align*}
%\cba &= \big \{ \phi : \mathcal{A} \rightarrow \mathcal{B}(\mathcal{H}) \; \; : \; \;  \phi \; \; \text{is a completely bounded map} \big \}; \\
%\cpa &= \big \{ \phi : \mathcal{A} \rightarrow \mathcal{B}(\mathcal{H}) \; \; : \; \;  \phi \; \; \text{is a completely positive map} \big \}; \\
%\ucpa &= \big \{ \phi : \mathcal{A} \rightarrow \mathcal{B}(\mathcal{H}) \; \; : \; \;  \phi \; \; \text{is a unital completely positive map} \big \}; \\
%\cpapg &= \big \{ \phi \in \cpa  \; \; : \; \;  \phi(a) = \phi (\tau_g(a)) \; \; \text{for all} \; g \in G \; \; \text{and} \; \; a \in \mathcal{A}_{g^{-1}} \big \}; \\
%\ucpapg &= \big \{ \phi \in \ucpa \; \; : \; \;  \phi(a) = \phi (\tau_g(a)) \; \; \text{for all} \; g \in G \; \; \text{and} \; \; a \in \mathcal{A}_{g^{-1}} \big \}.
%\end{align*}
Let $\phi_1, \phi_2 \in \ucpapg$ and $\lambda \in (0, 1)$. Then for all $g \in G$ and $a \in \mathcal{A}_{g^{-1}}$, one may note that 
\begin{align*}
\big ( \lambda \phi_1 + (1-\lambda) \phi_2 \big ) (a) = \lambda \phi_1(a) + (1-\lambda) \phi_2 (a) &= \lambda \phi_1 \big ( \tau_g(a) \big ) + (1-\lambda) \phi_2 \big ( \tau_g(a) \big ) \\
&=  \big ( \lambda \phi_1 + (1-\lambda) \phi_2 \big ) \big ( \tau_g(a) \big ).
\end{align*}
This shows that $\ucpapg$ is a convex subset of the locally convex topological vector space $\cba$. Now let $\big \{ \phi_\alpha  \big \}_{\alpha}$ be a net of elements in $\ucpapg$ such that $\phi_\alpha \rightarrow \phi \in \ucpa$ in the BW-topology (here, $\phi \in \ucpa$ becuase $\ucpa$ is a compact set). That is, $\phi_\alpha(a) \rightarrow \phi (a)$ in the weak operator topology in $\mathcal{B}(\mathcal{H})$ for all $a \in \mathcal{A}$. Let $g \in G$ and $a \in \mathcal{A}_{g^{-1}}$. Then $\phi_\alpha\big ( \tau_g(a) \big ) \rightarrow \phi \big ( \tau_g(a) \big )$ and $\phi_\alpha(a) \rightarrow \phi (a)$. However, $\phi_\alpha\big ( \tau_g(a) \big ) = \phi_\alpha(a)$ for each $\alpha$, and hence $\phi \big ( \tau_g(a) \big ) = \phi (a)$. Since $g \in G$ and $a \in \mathcal{A}_{g^{-1}}$ were chosen arbitrarily, we get that the convex set $\ucpapg$ is also compact in the BW-topology.

Now in this section, we state and prove the main result of this article (Theorem~\ref{thm;egiucp}). The result characterizes extreme points of the compact and convex set $\ucpapg$ using the techniques of Stinespring's and Paschke's dilation of completely positive maps. We denote the collection of all extreme points of the set $\ucpapg$ by the notation
\begin{equation*}
\text{ext} \big (\ucpapg \big ).
\end{equation*}
Before stating the main result, we recall a definition from \cite{Arveson1}.

\begin{definition} \label{def; faithful subspace}
Let $\mathcal{H}$ be a Hilbert space and $\mathcal{H}_0$ be a closed subspace of $\mathcal{H}$. Let $P$ denote the projection of $\mathcal{H}$ onto $\mathcal{H}_0$. We say that $\mathcal{H}_0$ is a faithful subspace of $\mathcal{H}$ for a set $\mathcal{M} \subseteq \mathcal{B}(\mathcal{H})$ if $PT\big |_{\mathcal{H}_0} = 0$ for $T \in \mathcal{M}$, then $T = 0$. 
\end{definition}

\noindent
We consider the following two sets 
\begin{equation} \label{eqn; cs phi}
\mathcal{C}^S_\phi := \big \{ \alpha_1 T_1 + \alpha_2 T_2 \; \; : \; \; \alpha_1, \alpha_2 \in [-1, 1], \; \; T_1, \; T_2 \; \text{as in Lemma \ref{lem;rnd}} \big \};  
\end{equation}
\begin{equation} \label{eqn; cp phi}
\mathcal{C}^P_\phi := \big \{ \alpha_1 T_1 + \alpha_2 T_2 \; \; : \; \; \alpha_1, \alpha_2 \in [-1, 1], \; \; T_1, \; T_2 \; \text{as in Lemma \ref{lem;prnd}} \big \}.   
\end{equation}
We direct the reader mainly to Lemmas~\ref{lem;rnd}, \ref{lem;prnd} and Equations~\eqref{eqn; K g inverse}, \eqref{eqn; U g}, \eqref{eqn; Y g inverse}, \eqref{eqn; W phi tilde} to follow the notations used in the theorem below. 

\begin{theorem}\label{thm;egiucp}
Let $G$ be a group and $\mathcal{A}$ be a unital $C^\ast$-algebra. Let $\tau = \big ( \{ \mathcal{A}_g \}_{g \in G},  \{ \tau_g \}_{g \in G}  \big )$ be a partial action of $G$ on $\mathcal{A}$. Consider a Hilbert space $\mathcal{H}$ and $\phi \in \ucpapg$ with the minimal Stinespring representation for $\phi$ is given by $\big (\pi, V, \mathcal{K} \big )$ and the Paschke's dilation for $\phi$ is given by $\big ( \mathcal{X}, \sigma, e \big )$. Then the following statements are equivalent:
\begin{enumerate}
\item the UCP map  $\phi \in \text{ext} \big (\ucpapg \big )$;
\item if $\phi_0 \in [0, \phi] \cap t\ucpapg$, then $\phi_0 = t\phi$, where $0 < t < 1$;
\item the subspace $[V\mathcal{H}]$ is a faithful subspace of the Hilbert space $\mathcal{K}$ for the set $\mathcal{C}^S_\phi$;
%\begin{equation*}
%\big \{ T \in  \pi(\mathcal{A})^\prime \; \; : \; \; 0 \leq T \leq 1_\mathcal{K}, \; \;  T(\mathcal{K}_g) \subseteq \mathcal{K}_g \; \; TU_g = U_g T\big|_{\mathcal{K}_{g^{-1}}} \; \; \text{for all} \; \; g \in G \big  \};
%\end{equation*}
\item if any $T \in \mathcal{C}^P_\phi$  
%\begin{equation*}
%\big \{ T \in  \widetilde{\sigma}(\mathcal{A})^\prime \; \; : \; \; 0 \leq T \leq 1_\mathcal{X}^\prime, \; \;  T(\mathcal{Y}_g) \subseteq \mathcal{Y}_g \; \; T\widetilde{W}_g \big |_{\mathcal{Y}_{g^{-1}}} = \widetilde{W}_g T \big |_{\mathcal{Y}_{g^{-1}}}  \; \; \text{for all} \; \; g \in G \big  \}  
%\end{equation*} 
satisfies $\langle T \hat{e}, \hat{e} \rangle = 0$, then $T=0$. 
\end{enumerate}
\end{theorem}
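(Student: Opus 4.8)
$(1) \Leftrightarrow (2)$, then $(2) \Leftrightarrow (3)$, and finally $(2) \Leftrightarrow (4)$, using the two Radon--Nikodym lemmas as the bridges between the abstract extremality condition and the concrete dilation-theoretic conditions. The equivalence $(1) \Leftrightarrow (2)$ is a purely convex-geometric fact about the compact convex set $\ucpapg$ and is essentially independent of the dilation machinery; I would handle it first. The remaining two equivalences are where the technical content from Section~\ref{sec; RND} enters: Lemma~\ref{lem;rnd} translates condition $(2)$ into a statement about the operator set appearing in $\mathcal{C}^S_\phi$, and Lemma~\ref{lem;prnd} does the same for $\mathcal{C}^P_\phi$.

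For $(1) \Leftrightarrow (2)$ I would argue as follows. If $\phi$ is not extreme, write $\phi = \tfrac{1}{2}(\psi_1 + \psi_2)$ with $\psi_1 \neq \psi_2$ in $\ucpapg$; then $\phi_0 := t\psi_1$ lies in $[0,\phi]$ and in $t\,\ucpapg$ (one checks $\phi - t\psi_1 = t\psi_2 + (1-2t)\phi \geq 0$ for suitable small $t$, using positivity of the pieces), yet $\phi_0 \neq t\phi$, violating $(2)$. Conversely, if $(2)$ fails, a $\phi_0 \in [0,\phi] \cap t\,\ucpapg$ with $\phi_0 \neq t\phi$ produces a nontrivial convex splitting $\phi = t(\phi_0/t) + (1-t)\big((\phi-\phi_0)/(1-t)\big)$, where both summands must again be verified to lie in $\ucpapg$ (unitality and the $\tau$-invariance being the points to check), contradicting $(1)$. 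The main care here is bookkeeping the scaling so that both factors are genuinely unital and $\tau$-invariant.

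For $(2) \Leftrightarrow (3)$, I would unwind condition $(2)$ via Lemma~\ref{lem;rnd}: a map $\phi_0 \in [0,\phi] \cap \cpapg$ corresponds bijectively to an operator $T$ in the set described in that lemma, and the condition ``$\phi_0 = t\phi$'' corresponds to ``$T = t\,1_{\mathcal{K}}$''. The failure of $(2)$ thus means there exists such a $T$ not a scalar multiple of the identity, equivalently $T_1 - T_2 \neq 0$ for two distinct operators in the Lemma~\ref{lem;rnd} set, i.e. a nonzero element of (the span giving) $\mathcal{C}^S_\phi$. The faithful-subspace condition $(3)$ says precisely that any such nonzero $S \in \mathcal{C}^S_\phi$ cannot satisfy $P_{[V\mathcal{H}]}\,S\big|_{[V\mathcal{H}]} = 0$; since $\phi_{T_1} = \phi_{T_2}$ is exactly the statement $V^*\pi(\cdot)(T_1 - T_2)V = 0$, which after using cyclicity of the minimal Stinespring representation translates into $P_{[V\mathcal{H}]}(T_1-T_2)\big|_{[V\mathcal{H}]} = 0$, the two conditions match. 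The equivalence $(2) \Leftrightarrow (4)$ is entirely parallel, replacing Lemma~\ref{lem;rnd} by Lemma~\ref{lem;prnd}, the scalar comparison $T = t\,1_{\mathcal{X}'}$ by the analogous one, and the faithfulness computation by the fact that $\phi_{T_1} = \phi_{T_2}$ reads $\langle (T_1 - T_2)\widetilde{\sigma}(\cdot)\hat{e}, \hat{e}\rangle = 0$, which by the density of $\{\widetilde{\sigma}(a)\hat{e}\}$ and the module structure collapses to $\langle T\hat{e}, \hat{e}\rangle = 0$ for the corresponding $T \in \mathcal{C}^P_\phi$.

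I expect the main obstacle to be the correct identification of ``$\phi_{T_1} = \phi_{T_2} \Rightarrow T_1 = T_2$ up to the faithfulness obstruction'' in both dilation pictures, i.e. showing that the vanishing of the compressed operator on the cyclic subspace is genuinely equivalent to the vanishing of the associated completely positive map, and that this equivalence respects the extra invariance constraints ($T(\mathcal{K}_g) \subseteq \mathcal{K}_g$ and the intertwining with $U_g$, respectively their $\widetilde{W}_g$-analogues) that distinguish the operators in $\mathcal{C}^S_\phi$ and $\mathcal{C}^P_\phi$ from arbitrary operators in $\pi(\mathcal{A})'$ or $\widetilde{\sigma}(\mathcal{A})'$. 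The subtlety is that $\mathcal{C}^S_\phi$ and $\mathcal{C}^P_\phi$ are defined as real-linear combinations $\alpha_1 T_1 + \alpha_2 T_2$ of the constrained operators, so one must confirm that differences arising from failures of extremality indeed land in these sets and that the faithfulness/vanishing condition is tested against the whole set rather than a single operator; I would keep the self-adjointness and the $[-1,1]$ coefficient range tracked carefully to ensure the translation is airtight.
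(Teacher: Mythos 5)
Your treatment of $(1) \Leftrightarrow (2)$ is correct and agrees with the paper's (the paper proves $(1)\Rightarrow(2)$ directly via Lemma~\ref{lem;rnd}, while you take the contrapositive with $\phi_0 = t\psi_1$, which also works and is lighter; the converse direction is identical), and your overall plan of routing $(3)$ and $(4)$ through Lemmas~\ref{lem;rnd} and~\ref{lem;prnd} is the paper's plan. However, the sketches of $(2)\Leftrightarrow(3)$ and $(2)\Leftrightarrow(4)$ rest on a pivot that is wrong. First, "failure of $(2)$ \ldots equivalently $T_1 - T_2 \neq 0$ for two distinct operators in the Lemma~\ref{lem;rnd} set" with $\phi_{T_1} = \phi_{T_2}$ can never occur: Lemma~\ref{lem;rnd} is an affine order \emph{isomorphism}, so distinct constrained operators never induce the same map. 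Second, and more seriously, your claim that $V^*(T_1-T_2)\pi(\cdot)V = 0$ "after using cyclicity \ldots translates into" $P(T_1-T_2)\big|_{[V\mathcal{H}]} = 0$ is true only in the trivial direction (evaluate at $a = 1_\mathcal{A}$; no cyclicity needed). The reverse implication --- vanishing of the compression on $[V\mathcal{H}]$ forces vanishing of the associated map, hence of the operator --- is precisely the faithfulness condition $(3)$ that the theorem characterizes; it is not something one can verify in general. Indeed, for any $S \in \pi(\mathcal{A})'$, cyclicity and commutation already give that $\phi_S = 0$ implies $S = 0$; so if your claimed equivalence held, condition $(3)$ would be satisfied by \emph{every} $\phi \in \ucpapg$, and by $(3)\Rightarrow(1)$ every invariant UCP map would be extreme, which is absurd. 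Your closing paragraph, which lists exactly this "identification" as the main obstacle to be checked, confirms the confusion: it is not an obstacle to be checked, it is the nontrivial content of the theorem.

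Concretely, what is missing is the renormalization device that carries the directions $(1)\Rightarrow(3)$ and $(1)\Rightarrow(4)$ (equivalently $\neg(3)\Rightarrow\neg(2)$ and $\neg(4)\Rightarrow\neg(2)$ in your chain). A nonzero self-adjoint $S \in \mathcal{C}^S_\phi$ with $PS\big|_{[V\mathcal{H}]} = 0$ is, by itself, not a Radon--Nikodym derivative of anything: it need be neither positive nor $\leq 1_\mathcal{K}$, so it does not witness a failure of $(2)$ or of extremality. The paper's proof chooses positive scalars $s, t$ with $\tfrac14 1_\mathcal{K} \leq sS + t1_\mathcal{K} \leq \tfrac34 1_\mathcal{K}$, checks that $sS + t1_\mathcal{K}$ again satisfies all the constraints of Lemma~\ref{lem;rnd} (the commutant, invariance and intertwining conditions are linear, and $1_\mathcal{K}$ satisfies them), uses positivity of the compression map $\mu$ together with $\mu(S)=0$ to conclude $0 < t < 1$, and then forms $\phi_1 = V^*(sS+t1_\mathcal{K})\pi(\cdot)V$ and $\phi_2 = \phi - \phi_1$, which are invariant, satisfy $\phi_1(1_\mathcal{A}) = t1_\mathcal{H}$, $\phi_2(1_\mathcal{A}) = (1-t)1_\mathcal{H}$, and hence yield the convex splitting $\phi = t\big(\tfrac1t\phi_1\big) + (1-t)\big(\tfrac1{1-t}\phi_2\big)$ in $\ucpapg$; extremality then forces $sS + t1_\mathcal{K} = t1_\mathcal{K}$ by the injectivity in Theorem~\ref{thm;arnd}, i.e.\ $S = 0$. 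The entirely parallel construction with $1_{\mathcal{X}'}$ and Theorem~\ref{thm;prnd} handles $(4)$. Without this step, your passage between the faithfulness-type conditions and non-extremality does not close, so the proposal as written has a genuine gap in both $(3)$ and $(4)$.
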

\begin{proof}
First, we prove that (1) $\iff$ (2). Let us consider $\phi \in \text{ext} \big (\ucpapg \big )$ and $\phi_0 \in [0, \phi] \cap t\ucpapg$, where $0 < t < 1$. Thus there exists 
$\phi_1 \in \ucpapg$ such that  $\phi_0 = t \phi_1 \leq \phi$. Since $\phi_0 \in [0, \phi]$, using Theorem \ref{thm;arnd} we obtain 
\begin{equation} \label{eqn; t phi 1}
\phi_0(\cdot) = t\phi_1(\cdot) = V^* T \pi(\cdot) V
\end{equation}
for some $T \in \pacom$ with $0 \leq T \leq  1_\mathcal{K}$. Observe that $0 < t < 1$ and $\phi_0 \in [0, \phi] \cap t\ucpapg$. This implies that $\phi_0 \in  [0, \phi] \cap \cpapg$. Hence by following Lemma~\ref{lem;rnd}, we get that the operator $T$ satisfies the properties that $T(\mathcal{K}_g) \subseteq \mathcal{K}_g$ and $TU_g = U_g T\big|_{\mathcal{K}_{g^{-1}}}$ for all $g \in G$. Then one may observe that $1_\mathcal{K} - T \in \pacom$ with $0 \leq 1_\mathcal{K} - T \leq  1_\mathcal{K}$ and $\big (  1_\mathcal{K} - T \big ) \big (\mathcal{K}_g \big ) \subseteq \mathcal{K}_g$ and $\big (  1_\mathcal{K} - T \big )U_g = U_g \big ( 1_\mathcal{K} - T \big )\big|_{\mathcal{K}_{g^{-1}}}$ for all $g \in G$. Thus Lemma \ref{lem;rnd} implies that $V^\ast \big (  1_\mathcal{K} - T \big ) \pi V \in [0, \phi] \cap \cpapg$. Define 
\begin{equation} \label{eqn;  phi 2}
\phi_2(\cdot) := V^\ast \frac{(1_\mathcal{K} - T)}{1 - t} \pi(\cdot) V.
\end{equation}
Then
\begin{equation*}
\phi_2 \big (1_\mathcal{A} \big ) = V^\ast  \pi \big (1_\mathcal{A} \big ) \frac{(1_\mathcal{K} - T)}{1 - t} V  = \frac{1}{1 - t} (V^\ast 1_\mathcal{K}   V - V^\ast T V) = \frac{(1 - t)1_\mathcal{H}}{1 - t} = 1_\mathcal{H}.
\end{equation*} 
This shows $\phi_2 \in \ucpapg$. Now for $\phi_1, \phi_2 \in \ucpapg$ by following Equations~\eqref{eqn; t phi 1} and \eqref{eqn;  phi 2}, we have
\begin{equation*}
 \phi(\cdot) = V^\ast \pi(\cdot) V = V^\ast T \pi(\cdot) V + (1-t) V^\ast \frac{(1_\mathcal{K} - T)}{1 - t} \pi(\cdot) V = t\phi_1(\cdot) + (1-t)\phi_2(\cdot).
\end{equation*}
Since $\phi \in \text{ext} \big (\ucpapg \big )$, we get $\phi = \phi_1 = \phi_2$. Hence, $\phi_0 = t \phi_1 = t \phi$.

Conversely, we assume that if $\phi_0 \in [0, \phi] \cap t\ucpapg$, then $\phi_0 = t\phi$, where $0 < t < 1$. Suppose $\phi = \lambda\phi_1 + (1 - \lambda) \phi_2$, for some $\phi_1, \phi_2 \in \ucpapg$ and $0 < \lambda < 1$. Then $\lambda\phi_1 \in [0, \phi] \cap \lambda\ucpapg$. Hence from the assumption, we get $\lambda\phi_1 = \lambda\phi$, and similarly $(1 - \lambda) \phi_2 = (1- \lambda) \phi$. This implies $\phi_1 = \phi = \phi_2$.

Now we prove that (1) $\iff$ (3). First, we assume that the subspace $[V\mathcal{H}]$ is a faithful subspace of the Hilbert space $\mathcal{K}$ for the set $\mathcal{C}^S_\phi$.
Let $\phi_1, \phi_2 \in \ucpapg$, and $0 < \lambda < 1$ be such that $\phi = \lambda \phi_1 + (1- \lambda) \phi_2$. As $\lambda \phi_1 \leq \phi$ and $\lambda \phi_1 \in \cpapg$ by following Lemma \ref{lem;rnd} we get $\lambda \phi_1 = V^* T \pi V$ for some $T \in  \pi(\mathcal{A})^\prime$ such that $0 \leq T \leq 1_\mathcal{K}$ satisfying the properties $T(\mathcal{K}_g) \subseteq \mathcal{K}_g$ and $TU_g = U_g T\big|_{\mathcal{K}_{g^{-1}}}$ for all $g \in G$. Let $P$ be the projection of the Hilbert space $\mathcal{K}$ onto $[V\mathcal{H}]$. Then for $h_1, h_2, \in \mathcal{H}$ we have
\begin{align*}
\big \langle P T V h_1, V h_2 \big \rangle &= \big \langle P T \pi(1_\mathcal{A}) V h_1, V h_2 \big \rangle \\
&= \big \langle T \pi(1_\mathcal{A}) V h_1, V h_2 \big \rangle \\
&= \big \langle V^* T \pi(1_\mathcal{A}) V h_1,  h_2 \big \rangle \\
&= \big \langle \lambda h_1,  h_2 \big \rangle \\
&= \lambda \big \langle V^* V h_1,  h_2 \big \rangle \\
&=  \big \langle \lambda V h_1, V h_2 \big \rangle.
\end{align*}
Since $h_1, h_2, \in \mathcal{H}$ were arbitrarily chosen we obtain that $PT\big |_{[V\mathcal{H}]} = \lambda 1_{[V\mathcal{H}]}$. This implies that $P\big (T - \lambda 1_\mathcal{K} \big )\big |_{[V\mathcal{H}]} = 0$. Now by using the assumption that the subspace $[V\mathcal{H}]$ is a faithful subspace of the Hilbert space $\mathcal{K}$ for the set $\mathcal{C}^S_\phi$ and observing the fact that $T - \lambda 1_\mathcal{K} \in \mathcal{C}^S_\phi$, we get  $T = \lambda 1_K$ (see Definition \ref{def; faithful subspace}). Hence, $\lambda \phi_1(\cdot) = V^* T \pi(\cdot) V  = V^* \lambda \pi(\cdot) V = \lambda \phi(\cdot)$, and thus $\phi_1 = \phi\) and a similar argument shows that \(\phi = \phi_2$. This shows that $\phi \in \text{ext} \big (\ucpapg \big )$.

Conversely, let us assume that $\phi$ is an extreme point of $\ucpapg$ and define a positive linear map $\mu: \pi(\mathcal{A})^\prime \rightarrow \mathcal{B}([V\mathcal{H}])$ by 
\begin{equation*}
\mu (T) := PT\big |_{[V\mathcal{H}]},    
\end{equation*}
where $P$ be the projection of the Hilbert space $\mathcal{K}$ onto $[V\mathcal{H}]$. Observe that $\mathcal{C}^S_\phi \subseteq \pi(\mathcal{A})^\prime$. Let $T \in \mathcal{C}^S_\phi$ be such that $\mu(T)= 0$. We need to show that \(T=0\). Since $T \in \mathcal{C}^S_\phi$, clearly $T$ is a self-adjoint operator. Thus we may chose positive scalars $s$ and $t$ such that $\frac{1}{4} 1_\mathcal{K} \leq sT + t 1_\mathcal{K} \leq \frac{3}{4} 1_\mathcal{K}$. Since $\mu$ is positive, we get $\frac{1}{4} \mu(1_\mathcal{K}) \leq \mu(sT + t 1_\mathcal{K}) \leq \frac{3}{4} \mu(1_\mathcal{K})$. This gives $\frac{1}{4} 1_{[V\mathcal{H}]} \leq  t 1_{[V\mathcal{H}]}\leq \frac{3}{4} 1_{[V\mathcal{H}]}$ and hence $0 < t < 1$. Next define
\begin{equation*}
\phi_1(\cdot) := V^\ast \big (sT + t 1_\mathcal{K} \big ) \pi(\cdot) V \; \; \; \text{and} \; \; \;  \phi_2(\cdot) := V^\ast \big (1_\mathcal{K} - (sT + t 1_\mathcal{K}) \big ) \pi(\cdot) V.
\end{equation*} 
By applying Theorem~\ref{thm;arnd}, we know that $\phi_1, \phi_2 \in [0, \phi]$ and  $\phi_1 + \phi_2 = \phi$. Let $g \in G$ and $a \in \mathcal{A}_{g^{-1}}$. Then assuming $T = \alpha_1T_1 + \alpha_2 T_2$ for some $T_1$ and $T_2$ as in Lemma \ref{lem;rnd} and $\alpha_1, \alpha_2 \in [-1, 1]$ (see Equation~\eqref{eqn; cs phi}), we have 
\begin{align*}
\phi_1 ( a ) &= V^\ast \big (sT + t 1_\mathcal{K} \big ) \pi(a) V  \\
&= V^\ast sT  \pi(a) V + V^\ast  t 1_\mathcal{K}  \pi(a) V  \\
&= s V^\ast (\alpha_1T_1 + \alpha_2 T_2) \pi(a) V + t V^\ast \pi(a) V \\
&= s\alpha_1 V^\ast T_1 \pi(a) V + s\alpha_2 V^\ast  T_2 \pi(a) V + t V^\ast \pi(a) V \\
& = s\alpha_1 V^\ast T_1 \pi(\tau_g(a)) V + s\alpha_2 V^\ast  T_2 \pi(\tau_g(a)) V + t V^\ast \pi(\tau_g(a)) V \\
&= s V^\ast (\alpha_1T_1 + \alpha_2 T_2) \pi(\tau_g(a)) V + t V^\ast \pi(\tau_g(a)) V \\
&= V^\ast sT  \pi(\tau_g(a)) V + V^\ast  t 1_\mathcal{K}  \pi(\tau_g(a)) V  \\
&= V^\ast \big (sT + t 1_\mathcal{K} \big ) \pi(\tau_g(a)) V  \\
&= \phi_1 \big ( \tau_g(a) \big)
\end{align*}
 and similarly 
\begin{equation*}
\phi_2 ( a ) = \phi_2 \big ( \tau_g(a) \big).
\end{equation*}
Thus $\phi_1, \phi_2 \in [0, \phi] \cap \cpapg$. Since $\mu(T) = 0$, we observe that
\begin{equation*}
\phi_1 \big (1_\mathcal{A} \big ) = V^\ast \big (sT + t 1_\mathcal{K} \big ) \pi\big (1_\mathcal{A} \big ) V = sV^\ast T V + tV^\ast V = PT\big |_{[V\mathcal{H}]} + t1_\mathcal{H} =  t1_\mathcal{H}
\end{equation*}
and similarly $\phi_2 \big (1_\mathcal{A} \big ) = (1-t) 1_\mathcal{H}$. This implies 
\begin{equation*}
\frac{1}{t} \phi_1, \; \; \frac{1}{1 - t} \phi_2 \in \ucpapg.  
\end{equation*}
Recall that $\phi = \phi_1 + \phi_2$ and write $\phi = t (\frac{1}{t} \phi_1) + (1 - t) (\frac{1}{1 - t} \phi_2)$. Then the assumption that $\phi$ is an extreme point implies that 
\begin{equation*}
\phi = \frac{1}{t} \phi_1 = \frac{1}{1 - t} \phi_2.
\end{equation*}
This gives 
\begin{equation*}
\phi_1(\cdot) = V^\ast \big (sT + t 1_\mathcal{K} \big ) \pi(\cdot) V = V^\ast t1_\mathcal{K} \pi(\cdot) V = t\phi \leq \phi.
\end{equation*}
By the order preserving affine isomorphism from Theorem~\ref{thm;arnd}, we must have $sT + t 1_\mathcal{K} = t 1_\mathcal{K}$, which implies that $T=0$ (as $s, t > 0$). This shows that the map $\mu$ is injective on $\mathcal{C}^S_\phi$ and therefore the subspace $[V\mathcal{H}]$ is a faithful subspace of the Hilbert space $\mathcal{K}$ for the set $\mathcal{C}^S_\phi$. 

Finally, we prove $(1) \iff (4)$. Let $\big ( \mathcal{X}, \sigma, e \big )$ be the Paschke's dilation of $\phi$ be such that $\phi(\cdot) = \big \langle \sigma(\cdot)e, e \big \rangle$ as given in Theorem \ref{thm;pd}. By Theorem \ref{thm;prnd}, we get 
\begin{equation} \label{eqn; phi using pd}
\phi(\cdot) = \big \langle \sigma(\cdot)e, e \big \rangle = \big \langle 1_{\mathcal{X}^\prime} \widetilde{\sigma}(\cdot)\hat{e}, \hat{e} \big \rangle.
\end{equation}
First we assume that if any $T \in \mathcal{C}^P_\phi$ satisfies $\langle T \hat{e}, \hat{e} \rangle = 0$, then $T = 0$. Assuming this, we show that $\phi \in \text{ext} \big (\ucpapg \big )$. Let $\phi_1, \phi_2 \in \ucpapg$, and $0 < \lambda < 1$ be such that $\phi = \lambda \phi_1 + (1- \lambda) \phi_2$. As $\lambda \phi_1 \leq \phi$ and $\lambda \phi_1 \in \cpapg$ by following Lemma \ref{lem;prnd} we get 
\begin{equation} \label{eqn; lambda phi1 using pd}
\lambda \phi_1 (\cdot) = \big \langle T \widetilde{\sigma}(\cdot) \hat{e}, \hat{e} \big  \rangle
\end{equation}
for some $T \in  \widetilde{\sigma}(\mathcal{A})^\prime$ such that $0 \leq T \leq 1_{\mathcal{X}^\prime}$ satisfying the properties $T(\mathcal{Y}_g) \subseteq \mathcal{Y}_g$ and $T\widetilde{W}_g \big |_{\mathcal{Y}_{g^{-1}}} = \widetilde{W}_g T \big |_{\mathcal{Y}_{g^{-1}}}$  for all $g \in G$ (see Equations~\eqref{eqn; Y g inverse}, and~\eqref{eqn; W phi tilde}). By following Equations~\eqref{eqn; phi using pd} and~\eqref{eqn; lambda phi1 using pd}, we obtain
\begin{equation*}
 \lambda \phi_1 \big (1_\mathcal{A} \big ) = \big \langle T \widetilde{\sigma} \big (1_\mathcal{A} \big ) \hat{e}, \hat{e} \big \rangle = \lambda 1_\mathcal{H}   = \lambda \big \langle 1_{\mathcal{X}^\prime} \widetilde{\sigma}\big (1_\mathcal{A} \big ) \hat{e}, \hat{e} \big \rangle = \lambda \phi\big (1_\mathcal{A} \big ).
\end{equation*}
This implies $\big \langle (T - \lambda 1_{\mathcal{X}^\prime}) \widetilde{\sigma} \big (1_\mathcal{A} \big ) \hat{e}, \hat{e} \big \rangle = \big \langle (T - \lambda 1_{\mathcal{X}^\prime})\hat{e}, \hat{e} \big \rangle = 0$. Now by using the assumption and observing the fact that $T - \lambda 1_{\mathcal{X}^\prime} \in \mathcal{C}^P_\phi$, we get  $T =  \lambda 1_{\mathcal{X}^\prime}$. Hence, $\lambda \phi_1(\cdot) = \big \langle \lambda \widetilde{\sigma}(\cdot)\hat{e}, \hat{e} \big \rangle = \lambda \phi(\cdot)$ and thus $\phi_1 = \phi = \phi_2$. This shows that $\phi \in \text{ext} \big (\ucpapg \big )$.

Conversely, let us assume that $\phi$ is an extreme point of $\ucpapg$ and define a positive linear map $\mu : \widetilde{\sigma}(\mathcal{A})^\prime \rightarrow \mathcal{B}\big (\mathcal{H} \big )$ by 
\begin{equation*}
S \mapsto \big \langle S \hat{e}, \hat{e} \big \rangle \; \; \; \text{for all} \; \; S \in  \widetilde{\sigma}(\mathcal{A})^\prime.
\end{equation*}
We observe that $\mathcal{C}^P_\phi \subseteq \widetilde{\sigma}(\mathcal{A})^\prime$. Let $T \in \mathcal{C}^P_\phi$ be such that $\mu(T) = \big \langle T \hat{e}, \hat{e} \big \rangle = 0$. We need to show that \(T=0\). Since $T \in \mathcal{C}^P_\phi$, clearly $T$ is a self-adjoint operator. Thus we may choose positive scalars $s, t$ such that $\frac{1}{4} 1_{\mathcal{X}^\prime} \leq sT + t 1_{\mathcal{X}^\prime} \leq \frac{3}{4} 1_{\mathcal{X}^\prime}$. Since $\mu$ is positive and $\mu(T) = 0$, we get $\frac{1}{4} \big \langle 1_{\mathcal{X}^\prime}\hat{e}, \hat{e} \big \rangle \leq  t \big \langle 1_{\mathcal{X}^\prime}\hat{e}, \hat{e} \big \rangle  \leq \frac{3}{4} \big \langle 1_{\mathcal{X}^\prime}\hat{e}, \hat{e} \big \rangle$. Hence $0 < t < 1$. Next for $i=1, 2$ define $\phi_i : \mathcal{A} \rightarrow \mathcal{B}\big (\mathcal{H} \big )$ by
\begin{equation*}
\phi_1(\cdot) := \big \langle \big (sT + t 1_{\mathcal{X}^\prime} \big ) \widetilde{\sigma}(\cdot) \hat{e}, \hat{e} \big \rangle  \hspace{10pt} \text{and} \hspace{10pt}
\phi_2(\cdot) := \big \langle \big (1_{\mathcal{X}^\prime} - (sT + t 1_{\mathcal{X}^\prime}) \big) \widetilde{\sigma}(\cdot) \hat{e}, \hat{e} \big \rangle .
\end{equation*}
By applying Theorem \ref{thm;prnd}, we know that $\phi_1, \phi_2 \in [0, \phi]$ and  $\phi_1 + \phi_2 = \phi$. Let $g \in G$ and $a \in \mathcal{A}_{g^{-1}}$. Then assuming $T = \alpha_1T_1 + \alpha_2 T_2$ for some $T_1$ and $T_2$ as in Lemma \ref{lem;prnd} and $\alpha_1, \alpha_2 \in [-1, 1]$ (see Equation~\eqref{eqn; cp phi}), we have 
\begin{align*}
\phi_1 \big ( a \big) &= \big \langle \big (sT + t 1_{\mathcal{X}^\prime} \big ) \widetilde{\sigma} \big ( a \big) \hat{e}, \hat{e} \big \rangle  \\
&= \big \langle sT \widetilde{\sigma} \big ( a \big) \hat{e}, \hat{e} \big \rangle + \big \langle  t 1_{\mathcal{X}^\prime}  \widetilde{\sigma} \big ( a \big) \hat{e}, \hat{e} \big \rangle  \\
&= s \big \langle (\alpha_1T_1 + \alpha_2 T_2) \widetilde{\sigma} \big ( a \big) \hat{e}, \hat{e} \big \rangle + t \big \langle  1_{\mathcal{X}^\prime}  \widetilde{\sigma} \big ( a \big) \hat{e}, \hat{e} \big \rangle \\
&= s\alpha_1 \big \langle T_1 \widetilde{\sigma} \big ( a \big) \hat{e}, \hat{e} \big \rangle + s\alpha_2 \big \langle  T_2 \widetilde{\sigma} \big ( a \big) \hat{e}, \hat{e} \big \rangle + t \big \langle  \widetilde{\sigma} \big ( a \big) \hat{e}, \hat{e} \big \rangle \\
& = s\alpha_1 \big \langle T_1 \widetilde{\sigma} \big ( \tau_g(a) \big) \hat{e}, \hat{e} \big \rangle + s\alpha_2 \big \langle  T_2 \widetilde{\sigma} \big ( \tau_g(a) \big) \hat{e}, \hat{e} \big \rangle + t \big \langle  \widetilde{\sigma} \big ( \tau_g(a) \big) \hat{e}, \hat{e} \big \rangle  \\
&= s \big \langle (\alpha_1T_1 + \alpha_2 T_2) \widetilde{\sigma} \big ( \tau_g(a) \big) \hat{e}, \hat{e} \big \rangle + t \big \langle  1_{\mathcal{X}^\prime}  \widetilde{\sigma} \big ( \tau_g(a) \big) \hat{e}, \hat{e} \big \rangle \\
&= \big \langle sT \widetilde{\sigma} \big ( \tau_g(a) \big) \hat{e}, \hat{e} \big \rangle + \big \langle  t 1_{\mathcal{X}^\prime}  \widetilde{\sigma} \big ( \tau_g(a) \big) \hat{e}, \hat{e} \big \rangle   \\
&= \big \langle \big (sT + t 1_{\mathcal{X}^\prime} \big ) \widetilde{\sigma} \big ( \tau_g(a) \big) \hat{e}, \hat{e} \big \rangle  \\
&= \phi_1 \big ( \tau_g(a) \big)
\end{align*}
and similarly 
\begin{equation*}
\phi_2 \big ( a \big) = \phi_2 \big ( \tau_g(a) \big).
\end{equation*}
Thus $\phi_1, \phi_2 \in [0, \phi] \cap \cpapg$. Since $\mu(T) = \big \langle T \hat{e}, \hat{e} \big \rangle = 0$, we observe that
\begin{equation*}
\phi_1 \big ( 1_\mathcal{A} \big) = \big \langle \big (sT + t 1_{\mathcal{X}^\prime} \big ) \widetilde{\sigma} \big ( 1_\mathcal{A} \big) \hat{e}, \hat{e} \big \rangle = s \big \langle T \hat{e}, \hat{e} \big \rangle + t \big \langle 1_{\mathcal{X}^\prime}  \hat{e}, \hat{e} \big \rangle = 0 + t1_\mathcal{H} =  t1_\mathcal{H}
\end{equation*}
and similarly $\phi_2 \big (1_\mathcal{A} \big ) = (1-t) 1_\mathcal{H}$. This implies 
\begin{equation*}
\frac{1}{t} \phi_1, \; \; \frac{1}{1 - t} \phi_2 \in \ucpapg.  
\end{equation*}
Recall that $\phi = \phi_1 + \phi_2$ and write $\phi = t (\frac{1}{t} \phi_1) + (1 - t) (\frac{1}{1 - t} \phi_2)$. Then the assumption that $\phi$ is an extreme point implies that 
\begin{equation*}
\phi = \frac{1}{t} \phi_1 = \frac{1}{1 - t} \phi_2.
\end{equation*}
This gives 
\begin{equation*}
\phi_1(\cdot) := \big \langle \big (sT + t 1_{\mathcal{X}^\prime} \big ) \widetilde{\sigma}(\cdot) \hat{e}, \hat{e} \big \rangle  = \big \langle  t 1_{\mathcal{X}^\prime} \widetilde{\sigma}(\cdot) \hat{e}, \hat{e} \big \rangle = t\phi \leq \phi.
\end{equation*}
By the order preserving affine isomorphism from Theorem \ref{thm;prnd} we must have $sT + t 1_{\mathcal{X}^\prime} = t 1_{\mathcal{X}^\prime}$, which in turn implies that $T=0$ (as $s, t > 0$). This proves the result.
\end{proof}

We conclude this section with the following remark, which provides a decomposition of elements in $\ucpapg$ using the classical Choquet theorem in view of the description of the set of extreme points of $\ucpapg$ as given in Theorem \ref{thm;egiucp}.  

\begin{remark}
Let $G$ be a group, $\mathcal{H}$ be a Hilbert space and $\mathcal{A}$ be a unital $C^\ast$-algebra. Let $\tau = \big ( \{ \mathcal{A}_g \}_{g \in G},  \{ \tau_g \}_{g \in G}  \big )$ be a partial action of $G$ on $\mathcal{A}$. By using the fact that every completely positive map is completely bounded (see~\cite[Proposition 3.6]{Paulsen}), we have the following set containments
\begin{equation*}
\ucpapg \subseteq \cpa \subseteq \cba.
\end{equation*}
This gives that the set $\ucpapg$ is a compact (in BW-topology) and convex subset of the locally convex topological space $\cba$. We recall the classical Choquet theorem, which gives a \textit{barycentric decomposition} in a compact and convex subset of a locally convex topological vector space. 
\begin{theorem} [{\cite[Page 14]{Phlp}}] \label{thm; Choquet}
Suppose that $X$ is a metrizable compact convex subset of a locally convex space $E$ and that $x_0$ is an element of $X$. Then there is a probability measure $\mu$ on $X$ whose \textit{barycenter} is $x_0$, that is,
\begin{equation} \label{eqn; barycentric decomposition 2}
f(x_0) = \int_{X} f \, \mathrm{d} \mu   
\end{equation}
for every continuous linear functional $f$ on $E$ and $\mu$ is supported by the extreme points of $X$.
\end{theorem}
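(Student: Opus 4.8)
The plan is to prove the metrizable Choquet theorem by the classical route through maximal measures in the Choquet ordering, the role of metrizability being precisely that it makes the set of extreme points a Borel (indeed $G_\delta$) set on which the decomposing measure can be concentrated. First I would exploit metrizability to manufacture a continuous strictly convex function on $X$: since $X$ is metrizable and compact, the space $A(X)$ of continuous affine functions (restrictions of $E^\ast$ together with constants) contains a sequence $(a_n)$ in its unit ball separating the points of $X$, and $f_0 := \sum_{n} 2^{-n} a_n^2$ is then continuous, convex, and strictly convex. I would also recall the barycenter map: for every Radon probability measure $\nu$ on the compact convex $X$ there is a unique $b(\nu)\in X$ with $\ell(b(\nu)) = \int_X \ell \, \mathrm{d}\nu$ for all $\ell \in A(X)$, where existence follows from compactness and uniqueness from the separation property of $A(X)$.

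Second, I would introduce the Choquet ordering on the weak$^\ast$-compact set $P(X)$ of probability measures, writing $\nu_1 \prec \nu_2$ when $\int \phi\,\mathrm{d}\nu_1 \le \int \phi\,\mathrm{d}\nu_2$ for every continuous convex $\phi$ on $X$. Because $\nu \mapsto \int \phi\,\mathrm{d}\nu$ is weak$^\ast$-continuous and bounded above on $P(X)$, every $\prec$-chain admits an upper bound (a weak$^\ast$ cluster point of the chain serves), so Zorn's lemma produces a $\prec$-maximal measure $\mu$ with $\mu \succ \delta_{x_0}$. Since any $\ell \in A(X)$ is simultaneously convex and concave, $\mu \succ \delta_{x_0}$ forces $\int \ell\,\mathrm{d}\mu = \ell(x_0)$ for all $\ell \in A(X)$; that is, $b(\mu) = x_0$, which is exactly the asserted identity $f(x_0) = \int_X f\,\mathrm{d}\mu$ for every continuous linear $f$ on $E$.

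Third, and here the genuine work lies, I would show that the maximal $\mu$ is carried by $\operatorname{ext}(X)$. Let $\overline{f_0}$ denote the upper (concave, upper semicontinuous) envelope $\overline{f_0}(x) = \inf\{\ell(x) : \ell \in A(X),\ \ell \ge f_0\}$. The key analytic input is the characterization of maximality (the Mokobodzki lemma): a measure is $\prec$-maximal if and only if it integrates $f_0$ and $\overline{f_0}$ equally, so in particular $\int_X (\overline{f_0} - f_0)\,\mathrm{d}\mu = 0$. As $\overline{f_0} - f_0 \ge 0$, the measure $\mu$ then concentrates on the Borel set $\{x : \overline{f_0}(x) = f_0(x)\}$. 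I would finish with Bauer's criterion: because $f_0$ is strictly convex, $\{\overline{f_0} = f_0\} = \operatorname{ext}(X)$. The inclusion $\subseteq$ is immediate, since if $x = \tfrac12(y+z)$ with $y \ne z$ then strict convexity of $f_0$ and concavity of $\overline{f_0}$ give $\overline{f_0}(x) \ge \tfrac12(f_0(y)+f_0(z)) > f_0(x)$; the reverse inclusion uses the identity $\overline{f_0}(x) = \sup\{\int_X f_0\,\mathrm{d}\nu : b(\nu) = x\}$ together with the fact that for extreme $x$ the only probability measure with barycenter $x$ is $\delta_x$ (a Hahn--Banach slicing argument). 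Hence $\mu(\operatorname{ext}(X)) = 1$, and metrizability guarantees that $\operatorname{ext}(X)$ is truly a Borel ($G_\delta$) set, so the phrase ``supported by the extreme points'' is meaningful.

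The principal obstacle is the maximality characterization $\int_X (\overline{f_0} - f_0)\,\mathrm{d}\mu = 0$: showing that a strictly $\prec$-dominating measure can always be built whenever $\int_X \overline{f_0}\,\mathrm{d}\mu > \int_X f_0\,\mathrm{d}\mu$ requires a dilation/disintegration argument that pushes mass outward along the envelope, realized via a Hahn--Banach separation inside $C(X)$. This is the technically delicate step; everything else is either soft (Zorn's lemma, weak$^\ast$ compactness of $P(X)$, weak$^\ast$ continuity of $\phi$-integration) or a direct consequence of the strict convexity of $f_0$.
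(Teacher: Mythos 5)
The paper does not prove this theorem at all: it is quoted verbatim from Phelps \cite[Page 14]{Phlp} and used as a black box, so there is no in-paper argument to compare against; your proposal must be judged against the standard literature proof. On that score your outline is correct and is essentially the classical argument, with one organizational difference worth noting. Phelps's proof of the \emph{metrizable} case is a one-shot construction: having built the strictly convex $f_0=\sum_n 2^{-n}a_n^2$ exactly as you do, he defines on the subspace $A(X)+\mathbb{R}f_0$ of $C(X)$ the functional $h+rf_0\mapsto h(x_0)+r\overline{f_0}(x_0)$, checks it is dominated by the sublinear functional $g\mapsto \overline{g}(x_0)$, and extends by Hahn--Banach to a positive functional, i.e.\ a probability measure $\mu$ with barycenter $x_0$ and $\mu(f_0)=\overline{f_0}(x_0)$; one then verifies $\mu(\overline{f_0})=\overline{f_0}(x_0)$ directly, so $\mu$ lives on $\{f_0=\overline{f_0}\}\subseteq \mathrm{ext}(X)$, and Zorn's lemma is never invoked. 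You instead route through the Choquet ordering, a Zorn-maximal $\mu\succ\delta_{x_0}$, and Mokobodzki's characterization of maximality --- the machinery designed for the non-metrizable Bishop--de Leeuw theorem. That route is valid and buys generality, but it front-loads the hardest lemma: your ``principal obstacle,'' namely that maximality forces $\int_X(\overline{f_0}-f_0)\,\mathrm{d}\mu=0$, is exactly the dilation lemma (for every $\mu$ and continuous convex $f$ there exists $\nu\succ\mu$ with $\nu(f)=\mu(\overline{f})$, again a Hahn--Banach extension against the sublinear functional $g\mapsto\mu(\overline{g})$), and in your write-up it is flagged but not carried out. As a proposal this is acceptable, since the remaining steps you state --- strict convexity of $f_0$ from the point-separating sequence, chains in $(P(X),\prec)$ having weak$^\ast$ cluster points as upper bounds, $\{\overline{f_0}=f_0\}\subseteq\mathrm{ext}(X)$ via concavity of the envelope, and the $G_\delta$ (hence Borel) nature of $\mathrm{ext}(X)$ under metrizability, which is what makes ``supported by the extreme points'' meaningful --- are all correct; note also that only the inclusion $\{\overline{f_0}=f_0\}\subseteq\mathrm{ext}(X)$ is needed, so your Bauer-type converse, while true, is superfluous. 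If you want the leaner proof matching the metrizable hypothesis you were given, replace your second and the delicate part of your third step by Phelps's direct extension argument.
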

\noindent
The decomposition of $x_0$ obtained in Equation \eqref{eqn; barycentric decomposition 2} is a \textit{barycentric decomposition} of $x_0$. If $\mathcal{A}$ is a separable $C^\ast$-algebra, then the set $\ucpapg$ is a metrizable set. In this case, we obtain the barycentric decomposition of elements in $\ucpapg$ by using Theorem \ref{thm; Choquet}. In Theorem~\ref{thm; Choquet}, we replace $X = \ucpapg$ and $E= \cba$, and we obtain: given $\phi_0 \in \ucpapg$, there exists a probability measure $\mu$ on $\ucpapg$ whose barycenter is $\phi_0$, that is,  
\begin{equation*}
f(\phi_0) = \int_{X} f \, \mathrm{d} \mu   
\end{equation*}
for every continuous linear functional $f$ on $\cba$ and $\mu$ is supported by the extreme points of $\ucpapg$. Then we invoke Theorem~\ref{thm;egiucp}, which gives the characterisation of extreme points of set $\ucpapg$, and this completes the picture of barycentric decomposition in the space $\ucpapg$, when $\mathcal{A}$ is separable. Next, we recall the Choquet-Bishop-dee Leeuw theorem when $X$ is not a metrizable set.

\begin{theorem} [{\cite[Page 17]{Phlp}}] \label{thm; Choquet Bishop dee leeuw}
Suppose that $X$ is a compact convex subset of a locally convex space $E$ and that $x_0$ is an element of $X$. Then there is a probability measure $\mu$ on $X$ whose \textit{barycenter} is $x_0$, that is,
\begin{equation*}
f(x_0) = \int_{X} f \, \mathrm{d} \mu   
\end{equation*}
for every continuous linear functional $f$ on $E$ and $\mu$ vanishes on every Baire subset of $X$ which is disjoint from the set of extreme points of $X$.
\end{theorem}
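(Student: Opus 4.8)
The plan is to obtain this non-metrizable sharpening of the Choquet theorem from the theory of \emph{maximal measures} in the Choquet ordering, in the manner of Bishop and de Leeuw. Write $P(X)$ for the set of regular Borel probability measures on $X$; by the Riesz representation theorem this is the state space of $C(X)$, and it is compact in the weak$^\ast$ topology. For $\mu \in P(X)$ the \emph{barycenter} $b(\mu) \in X$ is the unique point with $f(b(\mu)) = \int_X f \, \mathrm{d}\mu$ for every continuous affine $f$; its existence and uniqueness follow from the compactness and convexity of $X$ together with the fact that $E^\ast$ separates the points of $X$. Define the Choquet ordering on $P(X)$ by $\mu \succ \nu$ if and only if $\int_X f \, \mathrm{d}\mu \geq \int_X f \, \mathrm{d}\nu$ for every continuous convex function $f : X \to \mathbb{R}$.

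First I would dispose of the barycentric identity. Since continuous affine functions are both convex and concave, $\mu \succ \nu$ forces equality of their integrals against every continuous affine function, whence $b(\mu) = b(\nu)$; and Jensen's inequality shows that $\mu \succ \delta_{x_0}$ is equivalent to $b(\mu) = x_0$. Next, because $\mu \mapsto \int_X f \, \mathrm{d}\mu$ is weak$^\ast$-continuous for each continuous $f$, the relation $\succ$ has weak$^\ast$-closed upper sections, so any weak$^\ast$-cluster point of a $\succ$-chain dominates the whole chain; Zorn's lemma then yields, above any given measure, a $\succ$-maximal one. Applying this to $\delta_{x_0}$ produces a maximal $\mu \in P(X)$ with $b(\mu) = x_0$, which already gives $f(x_0) = \int_X f \, \mathrm{d}\mu$ for every continuous linear (indeed affine) $f$ on $E$.

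The substantive content is that such a maximal $\mu$ vanishes on the Baire sets missing the extreme points. For bounded $f$ I would introduce the upper envelope $\hat{f}(x) := \inf\{\ell(x) : \ell \text{ continuous affine}, \ \ell \geq f\}$, which is concave, upper semicontinuous, and satisfies $\hat{f} \geq f$ together with the representation $\hat{f}(x) = \sup\{\int_X f \, \mathrm{d}\nu : b(\nu) = x\}$ for continuous $f$. Two facts drive the argument. First, a point $x$ is extreme if and only if $\delta_x$ is the \emph{only} measure representing $x$ (if $\nu \ne \delta_x$ represents $x$, a level set of a separating functional splits $\nu$ into two pieces with distinct barycenters averaging to $x$, contradicting extremality); consequently, for every continuous convex $f$ the \emph{defect set} $D_f := \{x : f(x) < \hat{f}(x)\}$ is disjoint from $\mathrm{ext}(X)$. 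Second, I would prove the maximality criterion that $\mu$ is $\succ$-maximal if and only if $\int_X (\hat{f} - f)\, \mathrm{d}\mu = 0$ for every continuous convex $f$; since $\hat{f} - f \geq 0$, this forces $\mu(D_f) = 0$ for every such $f$.

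Finally I would pass from the individual defect sets to an arbitrary Baire set $B$ with $B \cap \mathrm{ext}(X) = \emptyset$. Because $B$ lies in the $\sigma$-algebra generated by countably many continuous functions and every non-extreme point already lies in some $D_f$, one can cover $B$ by a countable union $\bigcup_n D_{f_n}$ of defect sets of continuous convex functions $f_n$; countable subadditivity together with $\mu(D_{f_n}) = 0$ then gives $\mu(B) = 0$. I expect the two genuine obstacles to be exactly the maximality criterion of the previous paragraph and this last covering step: since $\hat{f}$ is merely upper semicontinuous, establishing $\int_X (\hat{f}-f)\,\mathrm{d}\mu = 0$ for maximal $\mu$ requires approximating $\hat{f}$ from above by affine functions and exploiting the supremum formula for $\hat{f}$, while the non-metrizability of $X$ is precisely what prevents $\mathrm{ext}(X)$ from being Borel and so forces the conclusion to be phrased as the vanishing of $\mu$ on Baire sets disjoint from $\mathrm{ext}(X)$ rather than as $\mu(\mathrm{ext}(X)) = 1$.
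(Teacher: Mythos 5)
The paper itself offers no proof of this statement: it is imported verbatim from Phelps \cite[p.~17]{Phlp} as the classical Choquet--Bishop--de Leeuw theorem, so your attempt can only be compared with the standard proof, whose framework your outline does follow for most of its length. The existence of a maximal measure above $\delta_{x_0}$ via weak$^\ast$-closed upper sections and Zorn, the equivalence $\mu \succ \delta_{x_0} \iff b(\mu) = x_0$, Mokobodzki's criterion ($\mu$ maximal iff $\int_X (\hat f - f)\,\mathrm{d}\mu = 0$ for every convex $f \in C(X)$), the consequence $\mu(D_f) = 0$ for each defect set $D_f = \{f < \hat f\}$, and the disjointness $D_f \cap \mathrm{ext}(X) = \emptyset$ via uniqueness of the representing measure at extreme points --- all of this is correct and is exactly how Phelps develops the subject, modulo the two substantive lemmas you rightly flag as requiring work.

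The genuine gap is your final paragraph. From ``$B$ is Baire, $B \cap \mathrm{ext}(X) = \emptyset$, and $X \setminus \mathrm{ext}(X) = \bigcup_f D_f$'' you conclude that $B$ is covered by countably many $D_{f_n}$, ``because $B$ lies in the $\sigma$-algebra generated by countably many continuous functions.'' That inference does not hold: membership of $B$ in a countably generated $\sigma$-algebra provides no mechanism for extracting a countable subfamily from an uncountable union, and compactness cannot rescue it, because $\hat f$ is merely upper semicontinuous, so each $D_f = \bigcup_n \{\hat f - f \geq 1/n\}$ is only an $F_\sigma$ set, not open; hence even after passing to a compact subset of $B$ there is no open-cover argument available. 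You also omit the reduction that any correct proof along these lines begins with: by inner regularity of the measure with respect to compact $G_\delta$ sets, it suffices to show $\mu(K) = 0$ for $K$ compact $G_\delta$ with $K \cap \mathrm{ext}(X) = \emptyset$. This is precisely where the non-metrizable difficulty is concentrated --- in the metrizable case a single strictly convex continuous $f$ satisfies $\mathrm{ext}(X) = \{f = \hat f\}$ and one defect set suffices, which is why Choquet's theorem is easy there --- and the classical proofs (Phelps, proof of the theorem in Section 4; Alfsen, I.4) treat such a $K$ by a bespoke envelope argument built from a continuous $f$ with $K = f^{-1}(1)$, $0 \leq f \leq 1$ (analyzing $\widehat{\chi_K}$ via the decreasing sequence $f^n \downarrow \chi_K$ and the behaviour of envelopes under such limits), not by a countable cover of $K$ by defect sets. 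As written, your proposal assembles the standard machinery correctly but leaves the theorem's actual crux --- the passage from ``$\mu(D_f) = 0$ for each convex $f$'' to ``$\mu(K) = 0$ for every compact $G_\delta$ set $K$ missing $\mathrm{ext}(X)$'' --- unproved.
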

\noindent
For the definition of Baire sets, one may refer to \cite[Section 4.1.2]{OB1} or \cite[Chapter 4]{Phlp}.  If $\mathcal{A}$ is not a separable $C^\ast$-algebra, then the set $\ucpapg$ need not be a metrizable set. In this case, we obtain the barycentric decomposition of elements in $\ucpapg$ by using Theorem \ref{thm; Choquet Bishop dee leeuw}. We replace $X = \ucpapg$ and $E= \cba$ as described above and then make use of Theorem \ref{thm;egiucp} to complete the picture of barycentric decomposition in the space $\ucpapg$, when $\mathcal{A}$ is not separable.
\end{remark}

\subsection*{Acknowledgements}
The first-named author kindly acknowledges the financial support received as an Institute postdoctoral fellowship from the Indian Institute of Science Education and Research Mohali. 

\subsection*{Availability of data and material} Not applicable.
\subsection*{Competing interests} The authors declare that there are no conflicts of interest.
%\subsection*{Declaration}
%The authors declare that there are no conflicts of interest.

\bibliographystyle{plain}
%\bibliography{mybib}   

\end{document}